\def\mojiparline#1{
    \newcounter{mpl}
    \setcounter{mpl}{#1}
    \@tempdima=\linewidth
    \advance\@tempdima by-\value{mpl}zw
    \addtocounter{mpl}{-1}
    \divide\@tempdima by \value{mpl}
    \advance\kanjiskip by\@tempdima
    \advance\parindent by\@tempdima
}
\title[Dehn twist presentations of hyperelliptic periodic diffeomorphisms]
{Dehn twist presentations of hyperelliptic periodic diffeomorphisms on closed surfaces}
\author{Norihisa Takahashi}
\author{Hiraku Nozawa}
\keywords{mapping class group, periodic maps, hyperelliptic surface, Dehn twist, orbifold}
\subjclass[2010]{57N05, 57M60, 57M99, 20F65, 20F05}
\date{}
\address{Norihisa Takahashi, Graduate School of Science and Engineering, Ritsumeikan University, Nojihigashi 1-1-1, Kusatsu, Shiga, 525-8577, Japan}
\email{ec042070@ed.ritsumei.ac.jp}
\address{Hiraku Nozawa, Department of Mathematical Sciences, Colleges of Science and Engineering, Ritsumeikan University, Nojihigashi 1-1-1, Kusatsu, Shiga, 525-8577, Japan}
\thanks{The second author is supported by JSPS Grant-in-Aid for Scientific Research 17K14195}
\email{hnozawa@fc.ritsumei.ac.jp}
\newtheorem{thm}{Theorem}
\newtheorem{cor}[thm]{Cororally}
\newtheorem{prop}[thm]{Proposition}
\newtheorem*{thm:}{Main Theorem}
\newtheorem{lem}[thm]{Lemma}
\theoremstyle{definition}
\theoremstyle{remark}
\newtheorem{rem}[thm]{Remark}
\newcommand{\opn}{\operatorname}
\definecolor{darkgreen}{cmyk}{1,0,1,.2}
\definecolor{darkorchid}{rgb}{0.6, 0.2, 0.8}
\definecolor{persimmon}{rgb}{0.93, 0.35, 0.0}
\newdimen\theight
\def\TeXref#1{%
             \leavevmode\vadjust{\setbox0=\hbox{{\tt
                     \quad\quad  {\small \textrm #1}}}%
             \theight=\ht0
             \advance\theight by \lineskip
             \kern -\theight \vbox to
             \theight{\rightline{\rlap{\box0}}%
             \vss}%
             }}%
\begin{document}

\begin{abstract}
	We classify up to conjugacy the group generated by a commuting pair of a periodic diffeomorphism and a hyperelliptic involution on an oriented closed surface.
	This result can be viewed as a refinement of Ishizaka's result on classification of the mapping classes of hyperelliptic periodic diffeomorphisms.
	As an application, we obtain the Dehn twist presentations of hyperelliptic periodic mapping classes, which are closely related to the ones obtained by Ishizaka.
\end{abstract}

\maketitle

\section{Introduction}

	Periodic diffeomorphisms on closed surfaces have been studied being motivated by the geometry of the mapping class group, Lefschetz fibrations and degenerating families of Riemann surfaces.
	They are related to periodic mapping classes by so-called Nielsen realization theorem \cite{Kerckhoff,Nielsen}:
	Any finite group of the mapping class group of an oriented closed surface $\Sigma_{g}$ of genus $g > 1$ is lifted to an isomorphic finite subgroup of a hyperbolic metric of the same period.
	Periodic diffeomorphisms and periodic mapping classes on $\Sigma_{g}$ are classified up to conjugacy in various cases by using so-called the total valency (see e.g.\ \cite{Ashikaga-Ishizaka,Hirose,Hirose2,Hirose-Kasahara,Ishizaka2}).
	Finite group actions on $\Sigma_g$ have been studied from various viewpoints (see e.g. \cite{Broughton,Nakamura-Nakanishi}).

	One of main problems is to determine periodic mapping classes which can be presented as a product of right-handed Dehn twists (see e.g.\ \cite{Hirose,BH71}).
	Ishizaka \cite{Ishizaka3,Ishizaka} classified hyperelliptic periodic diffeomorphisms and give their right-handed Dehn twist presentations based on the resolution of singularities of families of Riemann surfaces.
	Recall that an involution $I$ on $\Sigma_{g}$ is called hyperelliptic if it fixes $2g+2$ points (see Figure \ref{fig:hypinv}).
	Such involution is unique up to conjugacy, and has the maximum number of fixed points among involutions on $\Sigma_{g}$.
	A diffeomorphism on $\Sigma_{g}$ is called hyperelliptic if it commutes with a hyperelliptic involution $I$.
	The subgroup of $\opn{Mod}(\Sigma_{g})$ generated by the mapping classes of all such diffeomorphisms is a particular case of symmetric mapping class groups in the Birman-Hilden theory \cite{BH73}.
	
		\begin{figure}[ht]
			\centering
			\captionsetup{width=0.8\linewidth}
			\includegraphics[scale=0.6 ]{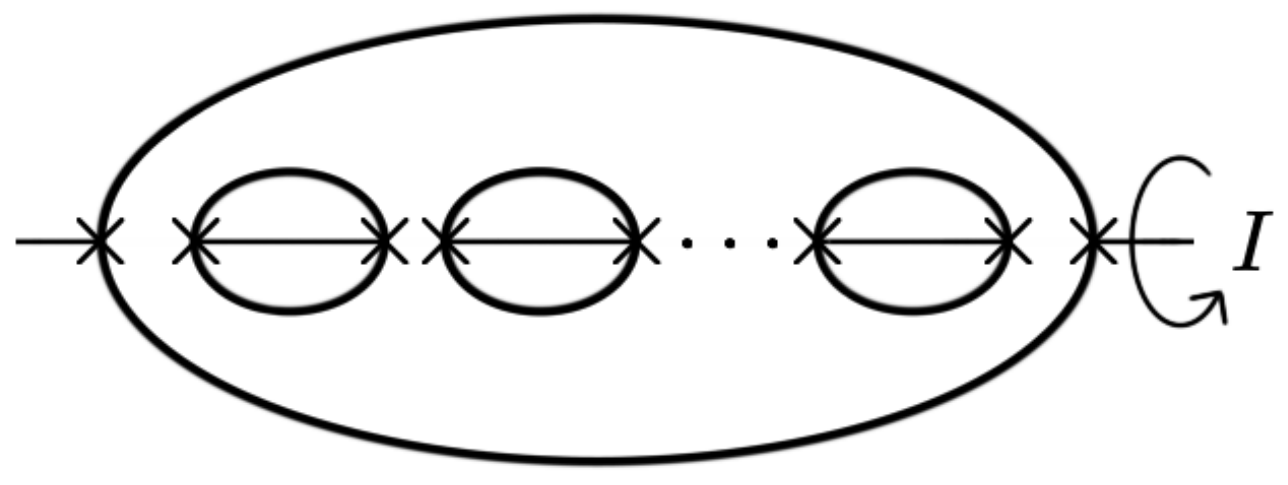}
			\caption{Hyperelliptic involution}
			\label{fig:hypinv}
		\end{figure}
	
	Let us state the main result, which can be viewed as a refinement of Ishizaka's classification of conjugacy classes of hyperelliptic periodic diffeomorphisms (\cite[Lemma 1.3]{Ishizaka}).
	We will apply the main result to obtain Dehn twist presentations of hyperelliptic periodic diffeomorphisms, which are closely related to the ones obtained by Ishizaka (\cite[Theorem 2.1]{Ishizaka}).
	In order to state the result, let us present three examples of hyperelliptic periodic diffeomorphisms $f_1$, $f_2$ and $f_3$.
	Consider an oriented closed surface $\Sigma_{g}$ of genus $g$ obtained by identifying pairs of the edges of the regular $(8g+4)$-gon in Figures \ref{fig:phi1} as follows:
	Identify the pairs of edges obtained from the pair marked with $\alpha$ by the clockwise $\frac{2k\pi}{4g+2}$-rotation for $k=0,1, \ldots, 4g+1$.
	We consider the standard orientation on $\Sigma_{g}$.
	Now $f_{1}$ is defined to be the periodic diffeomorphism induced by the clockwise $\frac{2\pi}{4g+2}$-rotation on the $(8g+4)$-gon.
	We define $f_{2}$ and $f_{3}$ in a way similar to $f_{1}$ by taking a regular $8g$-gon in Figure \ref{fig:phi2} and a regular $(4g+4)$-gon in Figure \ref{fig:phi3} respectively in place of the $(8g+4)$-gon.
	A hyperelliptic involution which commutes with $f_{i}$ is given by $I=f_{i}^{n/2}$, where $n$ is the period of $f_{i}$ for $i=1$ and $2$.
	Finally, $f_{3}$ is the periodic diffeomorphism induced from the clockwise $\frac{2\pi}{2g+2}$-rotation on a regular $(4g+4)$-gon in Figure \ref{fig:phi3}. 
	A hyperelliptic involution which commutes with $f_{3}$ is a $\pi$-rotation of each fundamental domain of $f_{3}$ as given in Figure \ref{fig:phi3}.
	Let $G_{i} = \langle f_{i}, I \rangle$ be the subgroup of $\opn{Diff}_{+}(\Sigma_{g})$ generated by $f_{i}$ and $I$ for $i=1$, $2$ and $3$.
	The main result of this article is as follows.
	
		\begin{thm}
		\label{thm:vis}
			With the above notations, for any periodic diffeomorphism $f$ on $\Sigma_{g}$ which commutes with a hyperelliptic involution $I$, the subgroup $G=\langle f, I \rangle$ of $\opn{Diff}_{+}(\Sigma_{g})$ generated by $f$ and $I$ is conjugate to a subgroup of either of $G_1$, $G_2$ or $G_3$.
		    \end{thm}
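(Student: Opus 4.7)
The plan is to reduce the classification to one of finite cyclic actions on the hyperelliptic quotient orbifold $\mathcal O:=\Sigma_g/\langle I\rangle$, whose underlying space is $S^2$ with $2g+2$ cone points of order $2$ (the images of the fixed points of $I$). Since $f$ commutes with $I$, the group $G=\langle f, I\rangle$ descends through the branched cover $\pi:\Sigma_g\to\mathcal O$ to a faithful action of $\bar G:=G/\langle I\rangle=\langle\bar f\rangle$ on $\mathcal O$; set $m:=|\bar f|$. I would invoke the classical fact that every finite cyclic subgroup of $\opn{Diff}_+(S^2)$ is conjugate to a group of rotations (for instance by averaging a Riemannian metric, or via the Nielsen realization of Kerckhoff). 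After this conjugation, $\bar f$ is the standard rotation of $S^2$ by $2\pi/m$ about the north--south axis, and the cone point set $P$ (with $|P|=2g+2$) is a $\bar f$-invariant finite subset of $S^2$.

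Since this rotation has fixed-point set $\{N,S\}$ and acts freely elsewhere, $P$ splits as $(P\cap\{N,S\})\sqcup P_0$ with $p:=|P\cap\{N,S\}|\in\{0,1,2\}$ and $P_0$ a disjoint union of $q$ free $\bar f$-orbits of size $m$, so $p+qm=2g+2$. The three values of $p$ correspond exactly to the three models: $p=1$ forces $m\mid 2g+1$ and matches $f_1$ (where $m=2g+1$ is the maximum), $p=2$ forces $m\mid 2g$ and matches $f_2$, and $p=0$ forces $m\mid 2g+2$ and matches $f_3$. In each case the inclusion $\langle\bar f\rangle\subseteq\langle\bar{f_i}\rangle$ of cyclic rotation groups about a common axis is automatic, since such groups form a chain under divisibility of orders.

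The second step is to further adjust the conjugation so that $P$ lands on the cone point set of the $f_i$ model. In the $f_i$ model the $2g+2-p$ non-polar cone points form a single $\langle\bar{f_i}\rangle$-orbit on a prescribed latitude, which splits into $q$ sub-orbits of $\langle\bar f\rangle$ at prescribed phases; our $P_0$ has the same combinatorial structure, so a $\bar f$-equivariant diffeomorphism of $S^2$ (obtained by choosing a diffeomorphism on a fundamental pie slice of the $\bar f$-action and spreading it using the cyclic group) carries $P_0$ onto the model configuration. Finally, the resulting conjugation on $\mathcal O$ lifts along $\pi$ to a diffeomorphism $h\in\opn{Diff}_+(\Sigma_g)$, unique up to composition with $I$; either lift satisfies $hIh^{-1}=I$ and $hfh^{-1}\in G_i$, whence $hGh^{-1}\subseteq G_i$.

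The main obstacle is the equivariant rearrangement in the second step: one has to move $q$ distinct $\bar f$-orbits, possibly sitting at $q$ distinct latitudes and $q$ distinct phases, simultaneously onto the prescribed configuration of the $f_i$ model while respecting $\bar f$-equivariance. Working on a single fundamental pie slice from $N$ to $S$ reduces this to moving $q$ marked points in a topological disk to $q$ prescribed positions by an isotopy compatible with the boundary identifications, which is a standard application of the isotopy extension theorem; the remaining ingredients (existence and uniqueness-up-to-$I$ of lifts along $\pi$, and the division into cases by $p\in\{0,1,2\}$) are then essentially formal.
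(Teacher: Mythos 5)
Your proposal is correct in outline, but it establishes the conjugacy by a mechanism genuinely different from the paper's. Both arguments start identically: pass to the quotient orbifold $\Sigma_{g}/\langle I\rangle\cong S^2(2,\ldots,2)$, linearize $\bar{f}$ as a rotation, and split into three cases according to how many of the $2g+2$ cone points $\bar{f}$ fixes (your $p=1,2,0$ are the paper's cases (ii), (i), (iii)). From there the paper does not build a conjugating diffeomorphism directly: it computes the total valency of $f$ in each case using Nielsen's integrality condition (Proposition \ref{prop:Nielsen}), matches it with that of a power of $f_i$ (Lemmas \ref{lem:2} and \ref{lem:3k}), and invokes Nielsen's classification theorem (Theorem \ref{thm:Nielsen}); since in case (iii) this controls only $f$ and not the pair $(f,I)$, the paper needs the separate Lemma \ref{lem:I} to normalize the involution. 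Your route --- equivariantly isotoping the cone-point configuration onto the model configuration in a fundamental pie slice and lifting the resulting conjugacy through the branched double cover --- handles the pair $(f,I)$ in one stroke, because the lift automatically intertwines the deck involutions; this replaces both the valency computations and Lemma \ref{lem:I}. What it gives up is the explicit total-valency data, which the paper reuses to recover Theorem \ref{thm:Ishizaka1} and to identify exactly which power of $f_i$ one lands on. Two small points to make explicit if you write this up: the lift of your conjugating map of $S^2$ exists because any diffeomorphism preserving the $2g+2$ branch points preserves the epimorphism $\pi_1(S^2\setminus P)\to\mathbb{Z}/2\mathbb{Z}$ sending every meridian to $1$ (the unique one defining the hyperelliptic cover); and in the case $p=1$ you may need to compose with a half-turn exchanging the two poles so that the polar cone point sits at the model's pole, which is harmless since that half-turn normalizes the rotation group.
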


	The conjugacy class of $f$ in this theorem was originally classified by Ishizaka's theorem based on resolutions of singularities of families of Riemann surfaces.
	Our proof of Theorem \ref{thm:vis} is elementary.

		\begin{figure}[ht]
			\centering
			\begin{tabular}{cc}
			\begin{minipage}{0.45\textwidth}
				\centering
				\captionsetup{width=0.8\linewidth}
				\includegraphics[width=0.9\textwidth ]{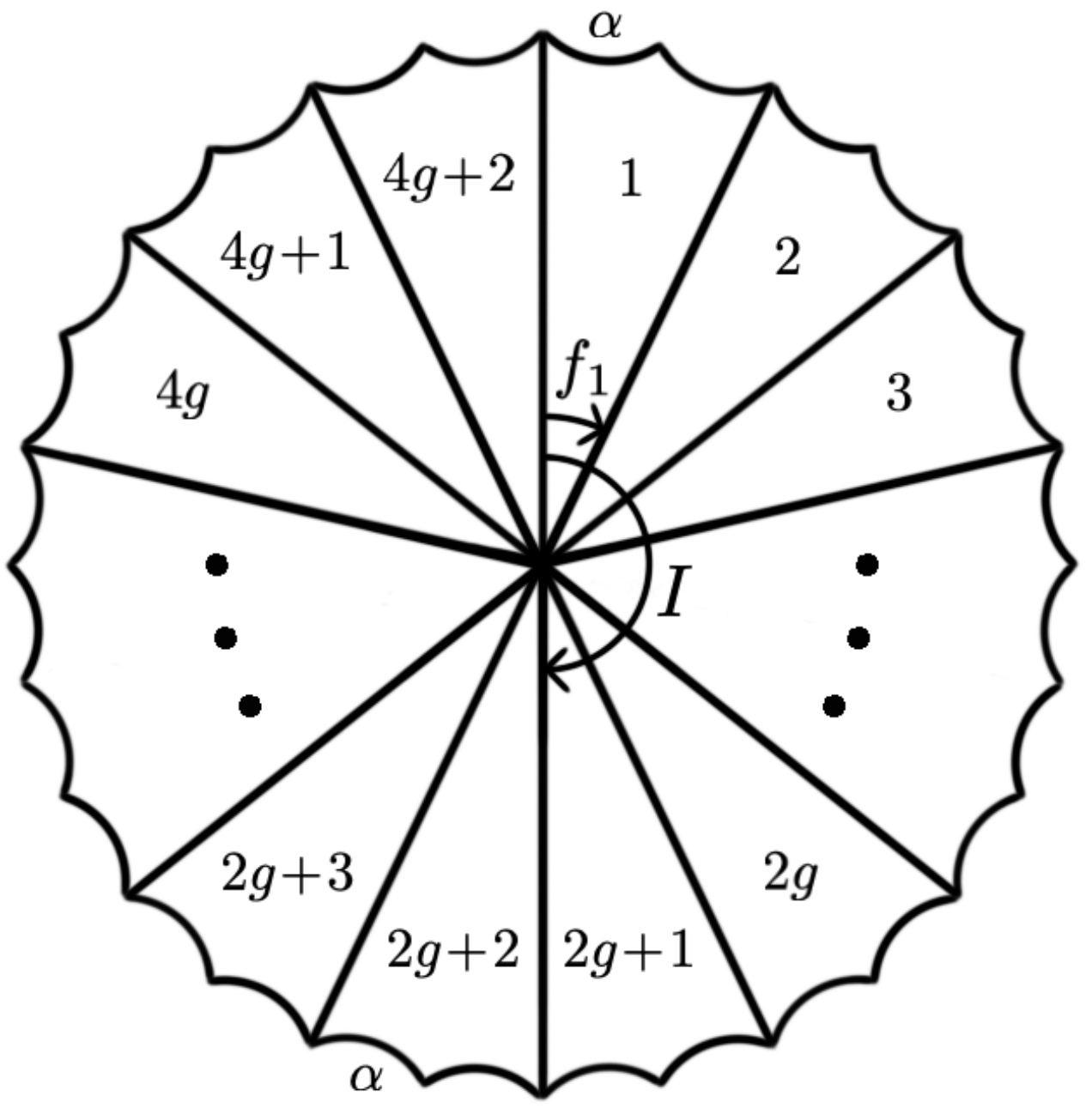}
				\caption{$G_1$-action on $\Sigma_{g}$: $f_{1}$ is the $\frac{2\pi}{4g+2}$-rotation and $I$ is the $\pi$-rotation}
				\label{fig:phi1}
			\end{minipage}\hfill
			\begin{minipage}{0.45\textwidth}
				\centering
				\captionsetup{width=0.8\linewidth}
				\includegraphics[width=0.9\textwidth ]{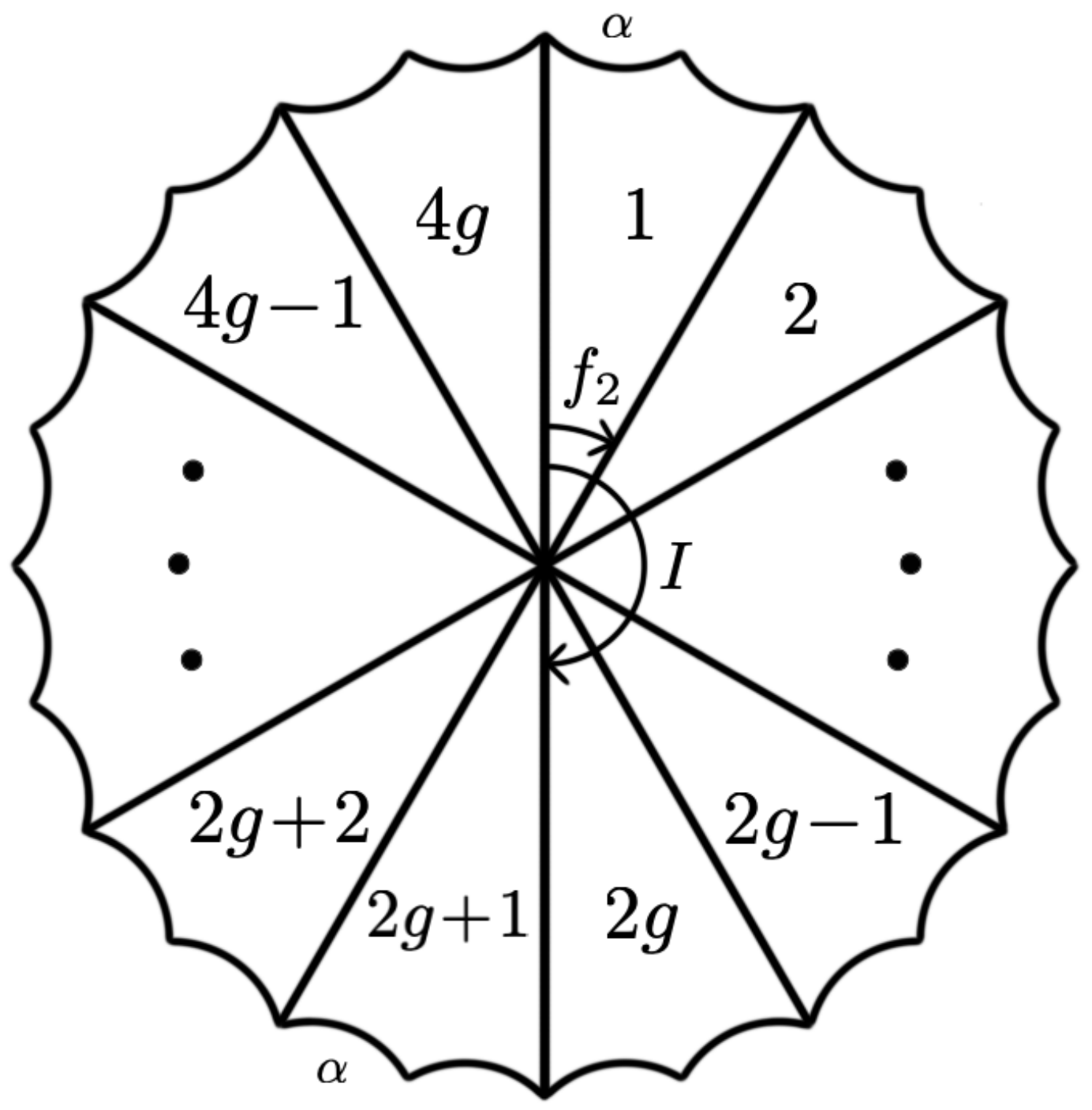}
				\caption{$G_2$-action on $\Sigma_{g}$: $f_{1}$ is the $\frac{2\pi}{4g}$-rotation and $I$ is the $\pi$-rotation
				}
				\label{fig:phi2}
			\end{minipage}\\
			\begin{minipage}{0.45\textwidth}
				\centering
				\captionsetup{width=0.9\linewidth}
				\includegraphics[width=0.9\textwidth ]{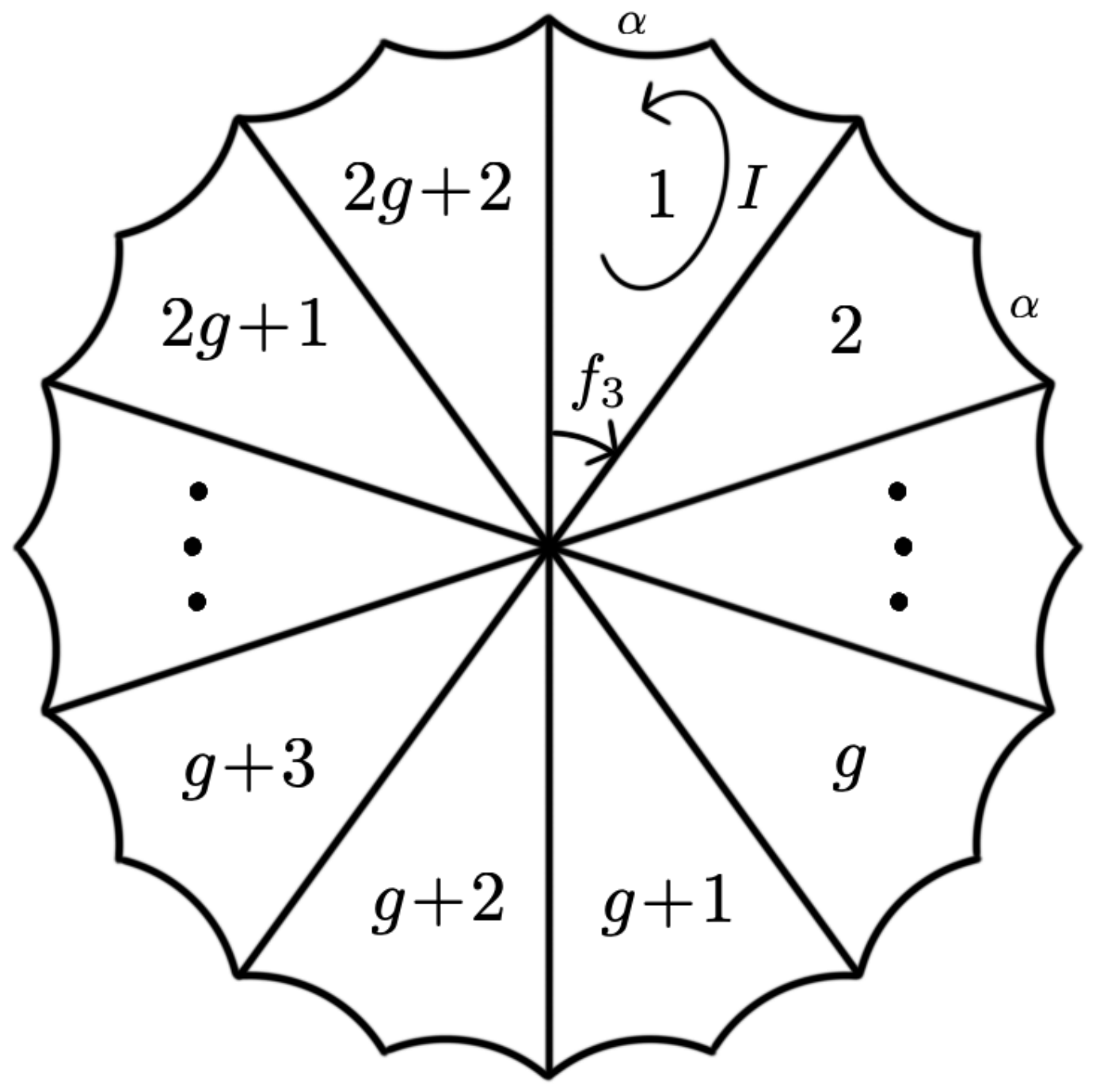}
				\caption{$G_3$-action on $\Sigma_{g}$: \\
				$f_{3}$ is the $\frac{2\pi}{2g+2}$-rotation and $I$ is an involution which rotates each fundamental domain by $\pi$}
				\label{fig:phi3}
			\end{minipage}\hfill
			\begin{minipage}{0.5\textwidth}
				\centering
	\captionsetup{width=0.9\linewidth}
				\includegraphics[scale=0.45]{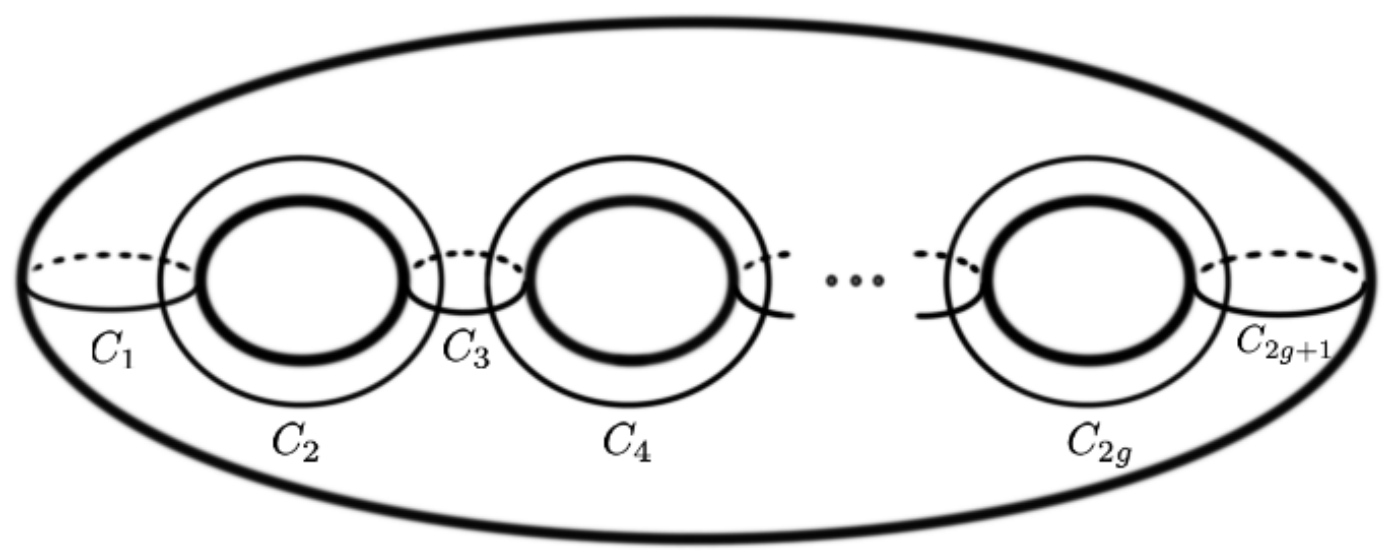}
				\caption{Humphries generators}
				\label{fig:humgen}
			\end{minipage}
		\end{tabular}
		\end{figure}

	Let us consider Dehn twist presentations of periodic hyperelliptic mapping classes.
	Here recall so-called Humphries generators of the mapping class group $\operatorname{Mod}(\Sigma_{g})$, which are Dehn twists along $2g+1$ curves $C_{1}, \ldots, C_{2g+1}$ shown in Figure \ref{fig:humgen}.
	We denote by $[\phi]$ the conjugacy class of a mapping class $\phi$.
	For a simple closed curve $C$ on $\Sigma_{g}$, we denote by the same symbol $C$ the Dehn twist along the curve $C$.
	Ishizaka obtained the following right-handed Dehn twist presentations of periodic hyperelliptic mapping classes.

		\begin{thm}[{\cite[Theorem 2.1]{Ishizaka}}]
		\label{thm:Ishizaka}
			The conjugacy class of the mapping class of any hyperelliptic periodic diffeomorphism $\phi$ on $\Sigma_{g}$ is equal to either of $[\phi_{1}^k]$, $[\phi_{2}^k]$, $[\phi_{3}^{k}]$ or $[I\phi_{3}^{2k}]$ for some positive integer $k$, where $I$ is the hyperelliptic involution, and we have
				\begin{enumerate}
				\renewcommand{\labelenumi}{(\roman{enumi})}
					\item $[\phi_1]= [C_1 C_2 \cdots C_{2g}]$,
					\item $[\phi_2]= [C_1 C_2 \cdots C_{2g} C_{2g}]$,
					\item $[\phi_3]= [C_1 C_2 \cdots C_{2g} C_{2g+1}]$,
				\end{enumerate}
where $C_{i}$ are Humphries generators (Figure \ref{fig:humgen}).
		\end{thm}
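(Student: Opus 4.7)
The plan is to combine Theorem \ref{thm:vis} with an explicit identification of each model rotation $f_i$ with the stated product of Humphries Dehn twists. First, I would apply Theorem \ref{thm:vis} to assume, up to conjugacy in $\opn{Diff}_+(\Sigma_g)$, that the pair $(\phi, I)$ lies inside one of the three model groups $G_i$. Examining the generators shows that $G_1 = \langle f_1 \rangle$ is cyclic of order $4g+2$ (since $I = f_1^{2g+1}$) and $G_2 = \langle f_2 \rangle$ is cyclic of order $4g$; the elements of $G_1$ and $G_2$ are therefore $f_1^k$ and $f_2^k$ respectively. For $G_3$, the involution $I$ is not a power of $f_3$, it commutes with $f_3$, and $f_3$ has order $2g+2$; hence the elements of $G_3$ are $f_3^k$ and $I f_3^k$. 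This immediately produces the four candidate families $[f_1^k]$, $[f_2^k]$, $[f_3^k]$, $[I f_3^k]$.

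The second step is to identify each polygon rotation with the corresponding Humphries product. For $f_1$, I would draw on the $(8g+4)$-gon a chain of $2g$ embedded arcs whose images in $\Sigma_g$ are simple closed curves forming a configuration isotopic to the Humphries chain $C_1, \ldots, C_{2g}$; the required isotopy can be realized by an orientation-preserving homeomorphism $h$, and it then suffices to show that $h f_1 h^{-1}$ equals $C_1 C_2 \cdots C_{2g}$ in $\opn{Mod}(\Sigma_g)$. This identification follows from the classical chain relation together with an order/orbifold count: the right-hand side has order exactly $4g+2$, it commutes with $I$, and its quotient orbifold has the same signature as $\Sigma_g/\langle f_1 \rangle$. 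By the uniqueness of such a cyclic action (a corollary of Theorem \ref{thm:vis} applied to the Humphries product), the two mapping classes must coincide. An entirely analogous argument handles $[f_2] = [C_1 \cdots C_{2g} C_{2g}]$ and $[f_3] = [C_1 \cdots C_{2g} C_{2g+1}]$, with the repeated $C_{2g}$ in the $f_2$ case and the extra $C_{2g+1}$ in the $f_3$ case reflecting the different polygon counts.

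Finally, the parity condition $I \phi_3^{2k}$ (rather than $I \phi_3^k$) must be justified. Since $I$ and $f_3$ commute, $(I f_3^{2k+1})^2 = f_3^{4k+2}$, and a direct total valency computation for the cyclic group $\langle I f_3^{2k+1} \rangle$ shows that its quotient orbifold coincides with that of $\langle f_j^{\ell}\rangle$ for suitable $j \in \{1,2\}$ and $\ell$; hence $I f_3^{2k+1}$ is conjugate in $\opn{Diff}_+(\Sigma_g)$ to an element already represented in families (i)--(iii), giving no new conjugacy classes. Only the even-exponent elements $I f_3^{2k}$ contribute genuinely new mapping classes.

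\emph{Main obstacle.} The most delicate step is the identification $[f_i] = [\phi_i]$ in $\opn{Mod}(\Sigma_g)$. The chain relation readily supplies the correct order and commutation with $I$, but one must verify carefully that the chain of arcs drawn on the polygon maps to the standard Humphries chain via an orientation-preserving homeomorphism, and that the rotation direction on the polygon corresponds to a right-handed (rather than left-handed) product of Dehn twists. Any error in the cyclic ordering of the chain or in the choice of orientation would invert the resulting product, so the bookkeeping on the fundamental polygon has to be done with some care.
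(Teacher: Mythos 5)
First, a point of orientation: the paper does not actually prove Theorem \ref{thm:Ishizaka} --- it is quoted from \cite{Ishizaka}. The paper's own contributions are the classification Theorem \ref{thm:vis} (whose proof, via Lemmas \ref{lem:123}--\ref{lem:I}, yields the list of conjugacy classes in the first sentence of the statement) and the related Theorem \ref{thm:Dehn}, which is proved in Section \ref{sec:3} by computing the action of polygon-adapted curves $A_i$, $B_i$, $D_i$ on $\pi_1(\Sigma_g,x)$ and comparing with the action of $f_i$, rather than by matching $f_i$ against the Humphries chain. Your first step --- reduction via Theorem \ref{thm:vis} to the three model groups, with $G_1$, $G_2$ cyclic and $G_3=\langle f_3\rangle\times\langle I\rangle$ --- is exactly the paper's route to the classification part and is fine.

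The gaps are in your second and third steps. (a) ``Correct order, commutes with $I$, correct quotient orbifold signature'' does not determine a conjugacy class: the powers $f_1^{k}$ with $\gcd(k,4g+2)=1$ all have order $4g+2$, commute with $I$, and have quotient orbifold $S^2(4g+2,2g+1,2)$, yet by Theorem \ref{thm:Nielsen} they are pairwise non-conjugate because their valencies differ. So your ``uniqueness of such a cyclic action'' only gives $[C_1C_2\cdots C_{2g}]=[f_1^{k}]$ for \emph{some} $k$ coprime to $4g+2$; to conclude $k=1$ you must compute a valency (a local rotation number at a fixed point) of the Humphries product, which is precisely what Lemma \ref{lem:123} does for the models and what is missing for the $C_i$'s. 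The problem is sharper in case (ii), where $C_1\cdots C_{2g}C_{2g}$ is not a chain and the chain relation does not even supply the order. This is why the paper instead verifies its presentations on a generating set of $\pi_1(\Sigma_g,x)$, where the identification is exact rather than up to an undetermined power. (b) Your disposal of the odd-exponent classes $I f_3^{2k+1}$ is wrong in mechanism: $I f_3^{2k+1}$ has order dividing $\opn{lcm}(2,2g+2)=2g+2$, and an element of order $d\mid 2g+2$ can be conjugate into the cyclic group $G_1$ of order $4g+2$ only if $d\mid\gcd(2g+2,4g+2)=2$, and into $G_2$ of order $4g$ only if $d\mid\gcd(2g+2,4g)\le 4$; so for general $g$ and $k$ these elements cannot land in families (i) or (ii). The correct statement, recorded in the Remark after Lemma \ref{lem:I}, is that $If_3$ is conjugate to $f_3^{2g+1}$ (checked by a total valency computation), so the odd-exponent elements are absorbed into family (iii). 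Your conclusion is right, but the argument you give for it does not survive the order count.
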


	In \cite{Ishizaka}, the mapping classes $\phi_{1}$, $\phi_{2}$ and $\phi_{3}$ are determined by the total valencies (see Theorem \ref{thm:Ishizaka1}).

	By applying Theorem \ref{thm:vis}, we obtain the following Dehn twist presentations of hyperelliptic periodic diffeomorphisms with an elementary argument.
	Let $x \in \Sigma_{g}$ be the barycenter of the polygon in each of Figures \ref{fig:phi1}, \ref{fig:phi2} and \ref{fig:phi3}, which is a fixed point of $f_{i}$ for each $i=1$, $2$ and $3$, respectively.
	$\opn{Mod}(\Sigma_{g}, x)$ is the pointed mapping class group of $\Sigma_{g}$ with marked point $x$.
	In Section \ref{sec:3}, we obtain the following Dehn twist presentations of $f_1$, $f_{2}$ and $f_{3}$ in $\opn{Mod}(\Sigma_{g}, x)$.
	
		\begin{thm}\label{thm:Dehn}
			Let $A_i$, $B_i$ and $D_i$ be simple closed curves shown in Figures \ref{fig:phi1-2}, \ref{fig:phi2-2} and \ref{fig:phi3-2}, respectively.
			Then, we have
			\begin{enumerate}
			\renewcommand{\labelenumi}{(\roman{enumi})}
				\item $(f_1) = A_{1} A_{2} \cdots A_{2g}$,
				\item $(f_2) = B_{1} B_{2g} B_{2g-1} \cdots B_{1}$ and
				\item $(f_3) = D_{1} D_{2} \cdots D_{2g+1}$,
			\end{enumerate}
			where $(f_i)$ is the pointed mapping class of $f_i$ for $i=1,2$ and $3$.
		\end{thm}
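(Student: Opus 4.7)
The plan is to verify each of the three identities by applying the Alexander method to the pointed mapping class group $\opn{Mod}(\Sigma_g, x)$: two pointed mapping classes of $(\Sigma_g,x)$ coincide if they induce the same isotopy classes of images on a collection of essential simple arcs based at $x$ that cut the surface into disks. By Theorem \ref{thm:vis} I may represent each $f_i$ concretely as the rotation of the corresponding regular polygon with the standard edge identifications from the introduction, and the curves $A_i$, $B_i$, $D_i$ of Figures \ref{fig:phi1-2}, \ref{fig:phi2-2}, \ref{fig:phi3-2} are arranged as chains on the polygons with the usual intersection pattern (consecutive curves meeting once, non-consecutive disjoint).

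For case (i), $f_1$ is the clockwise $\frac{2\pi}{4g+2}$-rotation on the $(8g+4)$-gon $P_1$, fixing the center $x$ and permuting the vertices of $P_1$ in a single orbit. I take the radial segments from $x$ to each vertex of $P_1$ as reference arcs; these cut $\Sigma_g$ into disks and so determine an element of $\opn{Mod}(\Sigma_g,x)$. Under $f_1$ each such radial arc is rotated onto the next one. I then trace step by step how the successive Dehn twists $A_{2g}, A_{2g-1}, \ldots, A_1$ (read right to left in the product) transform a chosen radial arc: each $A_j$ modifies the current arc only in a neighborhood of its unique intersection with $A_j$, and the chain $A_1, \ldots, A_{2g}$ has been laid out so that each individual twist advances the arc endpoint by one step along the vertex orbit. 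The cumulative effect matches $f_1$, and repeating the comparison on the full system of reference arcs yields $(f_1) = A_1 \cdots A_{2g}$.

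Cases (ii) and (iii) follow the same template, with minor geometric differences. For $f_3$ on the $(4g+4)$-gon, the rotation has order $2g+2$ and the orbifold quotient $\Sigma_g/\langle f_3\rangle$ has three cone points, so a chain $D_1, \ldots, D_{2g+1}$ of length $2g+1$ is needed; the arc-by-arc check proceeds exactly as in case (i). For $f_2$ on the $8g$-gon, the cone-point data of the quotient forces one of the curves to be used twice in the Dehn twist factorization, which is why the product is the nonstandard $B_1 B_{2g} B_{2g-1} \cdots B_1$: the leading $B_1$ encodes an extra half-turn about an order-two cone point, paralleling the double occurrence of $C_{2g}$ in Ishizaka's presentation (ii) in Theorem \ref{thm:Ishizaka}.

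The main obstacle will be the explicit placement of the curves $A_i$, $B_i$, $D_i$ on the polygons and the bookkeeping of the iterated Dehn twist action on the reference arcs, especially justifying the leading factor $B_1$ in case (ii). Once these geometric choices are fixed, the remaining verification is a routine, if slightly tedious, application of the Alexander method to a full collection of arcs through $x$.
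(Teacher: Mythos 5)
Your overall strategy---fix a finite collection of reference objects through the marked point that determines a pointed mapping class, and check that the rotation and the product of twists act the same way on each---is the same in spirit as the paper's, which verifies the identity on a generating set of $\pi_1(\Sigma_g,x)$ and invokes faithfulness of the $\opn{Mod}(\Sigma_g,x)$-action on $\pi_1(\Sigma_g,x)$. But there are two concrete problems with your plan as written. First, your chosen reference system, the radial segments from $x$ to the vertices of the polygon, is not legitimate for the Alexander method in $\opn{Mod}(\Sigma_g,x)$: the vertices are not marked points, so the isotopy class of such an arc is not an invariant of a pointed mapping class, and ``$f$ fixes these arcs up to isotopy'' carries no information. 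You would need to replace them by loops based at $x$ (which is exactly the paper's choice: the loops $\beta_i$, $\gamma_i,\delta_i$, $\varepsilon_i$ running through the polygon) or by a filling system of essential closed curves, and then you are doing the paper's computation.

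Second, the entire mathematical content of the theorem is the explicit verification you defer as ``routine bookkeeping,'' and the one place where you attempt to explain the structure of the answer---the repeated $B_1$ in case (ii), attributed to ``an extra half-turn about an order-two cone point''---is a heuristic, not a proof. The actual mechanism in the paper is that each $B_i$ sends $\gamma_i$ to an auxiliary loop $\delta_{i+1}$ and $B_{i+1}$ sends $\delta_{i+1}$ to $\gamma_{i+1}$, so two consecutive twists are needed to advance each generator; this forces the cyclic ordering and the wrap-around factor $B_1$ at the front. Your description of case (i) is also off: it is not true that ``each individual twist advances the arc endpoint by one step'' (that would compose to a rotation by $2g$ steps); rather, for each generator $\beta_i$ exactly one factor $A_{i+1}$ in the product moves it, to $\beta_{i+1}$, and every other factor fixes it, which is what the telescoping computation $A_1\cdots A_{2g}(\beta_i)=A_1\cdots A_i(\beta_{i+1})=\beta_{i+1}$ records. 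Until these twist-by-twist actions on a valid reference system are actually computed, the identities have not been established.
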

		
 Our Dehn twist presentations in $\opn{Mod}(\Sigma_{g},x)$ is related to Ishizaka's Dehn twist presentations in $\opn{Mod}(\Sigma_{g})$ via the forgetful map $\opn{Mod}(\Sigma_{g},x) \to \opn{Mod}(\Sigma_{g})$.

		\begin{figure}[ht]
			\centering
			\begin{tabular}{ccc}
			\begin{minipage}{0.33\textwidth}
				\centering
			\captionsetup{width=0.8\linewidth}	\includegraphics[width=1\textwidth ]{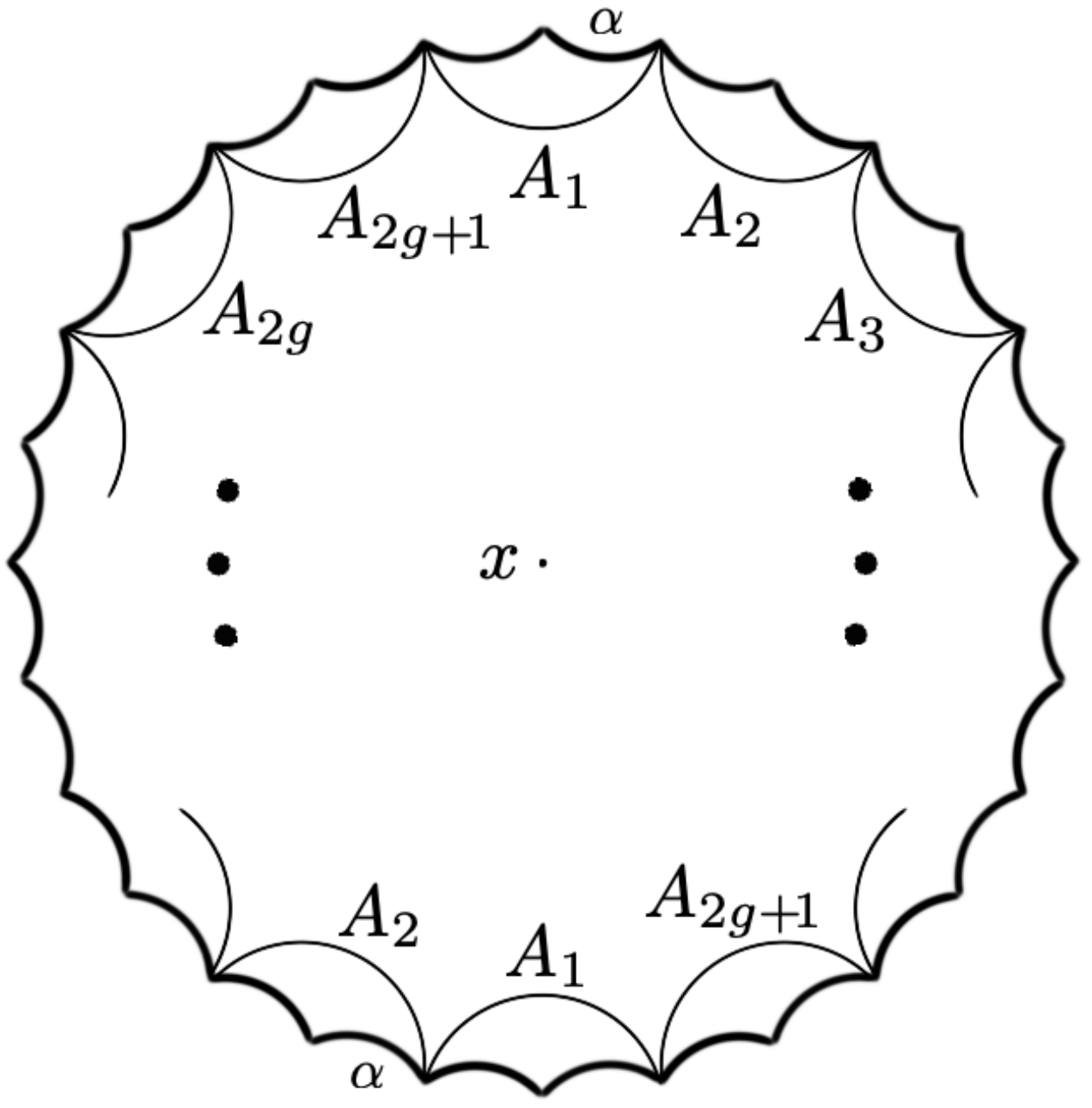}
				\caption{Curves $A_{i}$ on $\Sigma_{g}$ in Fig.\ \ref{fig:phi1}}
				\label{fig:phi1-2}
			\end{minipage}\hfill
			\begin{minipage}{0.33\textwidth}
				\centering
			\captionsetup{width=0.8\linewidth}	\includegraphics[width=1\textwidth ]{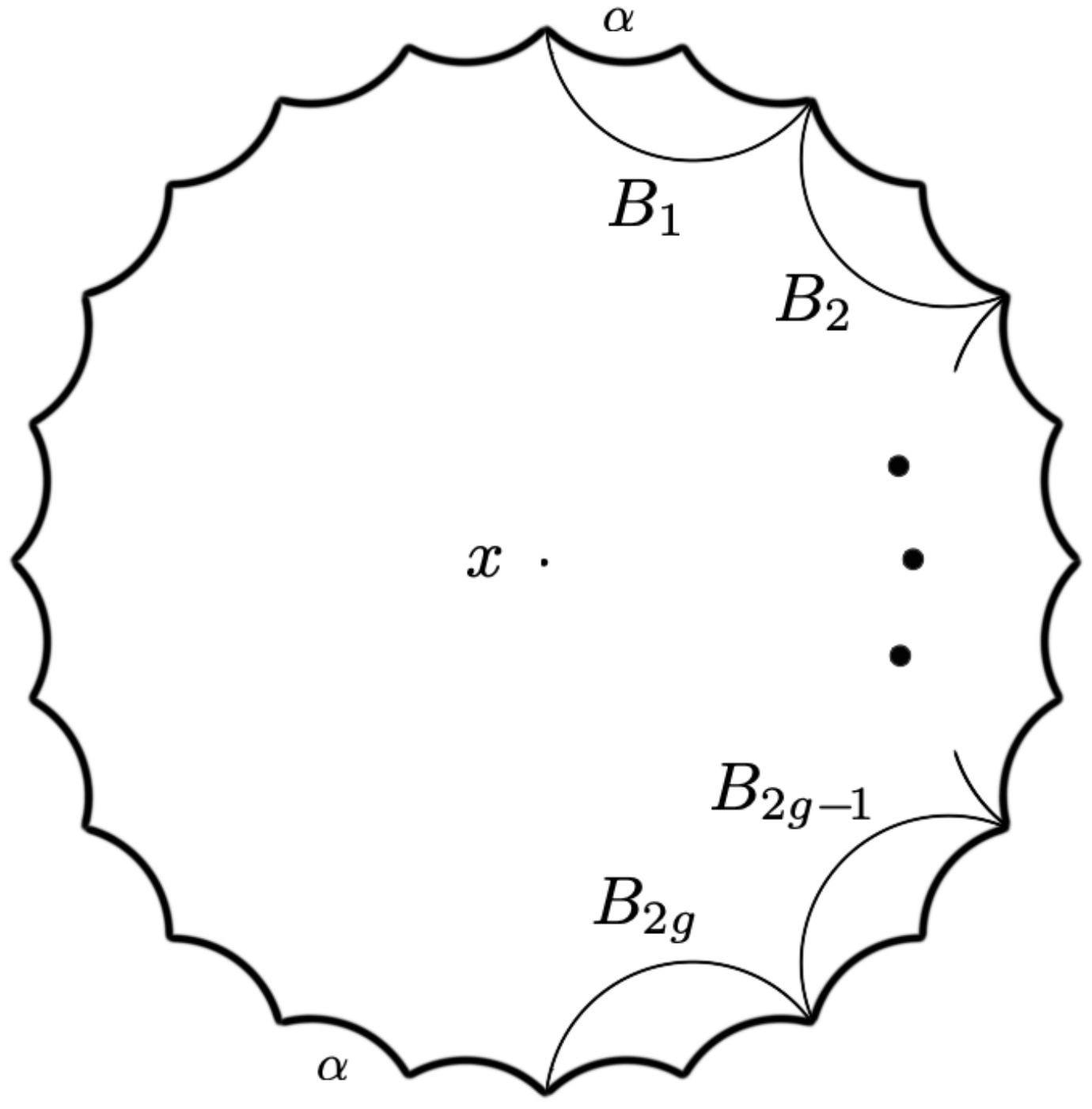}
				\caption{Curves $B_{i}$ on $\Sigma_{g}$ in Fig.\ \ref{fig:phi2}}
				\label{fig:phi2-2}
			\end{minipage}\hfill
			\begin{minipage}{0.33\textwidth}
				\centering
			\captionsetup{width=0.8\linewidth}	\includegraphics[width=1\textwidth ]{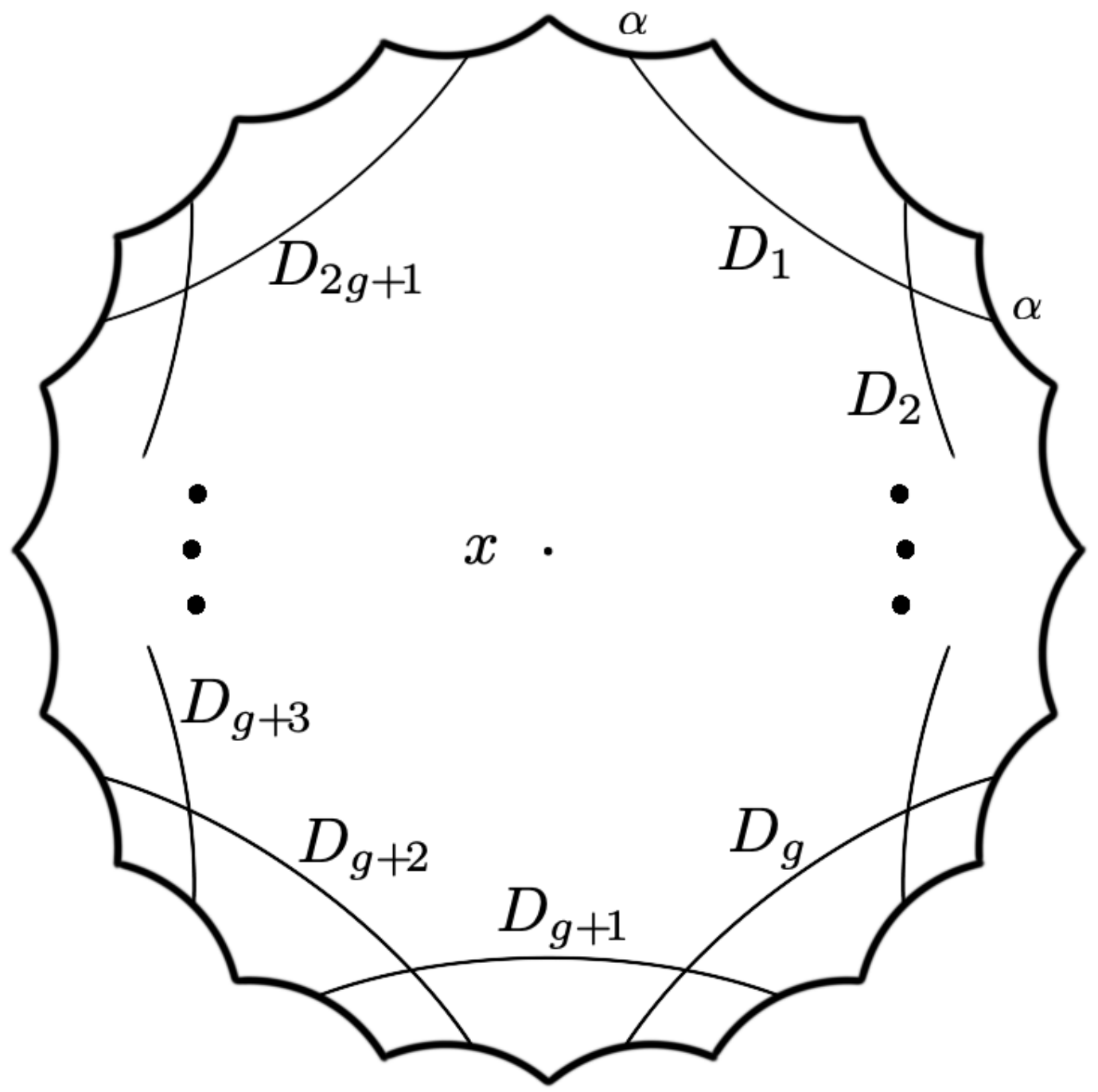}
				\caption{Curves $C_{i}$ on $\Sigma_{g}$ in Fig.\ \ref{fig:phi3}}
				\label{fig:phi3-2}
			\end{minipage}
			\end{tabular}
		\end{figure}
		
\section{Total valency of hyperelliptic periodic diffeomorphisms}
\label{sec:2}

	Let us recall the classification of the conjugacy classes of periodic diffeomorphisms on $\Sigma_{g}$ in terms of the total valency introduced in \cite{Ashikaga-Ishizaka}.
	Let $f \in \opn{Diff}_+(\Sigma_{g})$ be a periodic diffeomorphism of order $n$.
	Let $F$ be the cyclic subgroup of $\opn{Diff}_{+}(\Sigma_{g})$ generated by $f$.
	An $F$-orbit $Fx$ is called {\it multiple} if $|Fx| < |F|$, where $|\cdot|$ denote the cardinality.
	For a multiple orbit $Fx$, let $\lambda = |F_{x}|$, where $F_{x}$ is the isotropy group of $x$.
	Then, there uniquely exists $\nu \in \{1,2, \ldots, \lambda - 1\}$ such that the restriction of $f$ to a small neighborhood of $x$ is the clockwise $\frac{2\pi\nu}{\lambda}$-rotation.
	For this integer $\nu$, there uniquely exists an integer $\theta \in \{1,2, \ldots, \lambda - 1\}$ such that $\nu \theta \equiv 1 \mod \lambda$.
	This $\frac{\theta}{\lambda}$ is called the {\it valency} of $Fx$.
	Let $\frac{\theta_1}{\lambda_1}, \frac{\theta_2}{\lambda_2}, \ldots, \frac{\theta_s}{\lambda_s}$ be the valencies of all multiple orbits of $f$.
	The data $[ \, g,n \,; \frac{\theta_1}{\lambda_1} + \frac{\theta_2}{\lambda_2} + \cdots + \frac{\theta_s}{\lambda_s} \, ]$ is called the {\it total valency} of $f$. 
	By the following theorem, total valencies determine periodic diffeomorphisms on $\Sigma_{g}$ up to conjugation.
	
		 \begin{thm}[{Nielsen \cite[Section 11]{Nielsen}, see also \cite[Section 1.3]{Ashikaga-Ishizaka}, \cite[Theorem 2.1]{Hirose}}]
		\label{thm:Nielsen}
			Let $f, f'$ be periodic diffeomorphisms of $\Sigma_{g}$ with total valencies $[\,g,n \,;\, \frac{\theta_1}{\lambda_1} + \frac{\theta_2}{\lambda_2} + \cdots +\frac{\theta_s}{\lambda_s} \,]$ and $[\,g,n' \,;\,\frac{\theta_1'}{\lambda_1'} + \frac{\theta_2'}{\lambda_2'} + \cdots +\frac{\theta_{s'}'}{\lambda_{s'}'} \,]$, respectively.
			$f$ is conjugate to $f'$ if and only if the following are satisfied:
				\begin{enumerate}
				\renewcommand{\labelenumi}{(\roman{enumi})}
					\item $s=s'$,
					\item $n=n'$,
					\item after changing indices, we have $\frac{\theta_i}{\lambda_i}=\frac{\theta_i'}{\lambda_i'}$ for $i =1,2,\ldots,s$.
				\end{enumerate}
		\end{thm}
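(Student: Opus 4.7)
The plan is to recast the theorem as a classification of cyclic branched covers. Write $F=\langle f\rangle$ and $F'=\langle f'\rangle$, and set $\mathcal{O}=\Sigma_g/F$, $\mathcal{O}'=\Sigma_g/F'$; both are orientable $2$-orbifolds. Necessity is essentially a definition chase: any diffeomorphism $h$ with $hfh^{-1}=f'$ descends to an orbifold diffeomorphism $\bar h\colon\mathcal{O}\to \mathcal{O}'$, and local linearization near fixed points shows that $\bar h$ preserves the orders $\lambda_i$ and the rotation numbers, hence the valencies $\theta_i/\lambda_i$.

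For sufficiency I would first pin down the base topology via the Riemann--Hurwitz formula
\[
2g-2 \;=\; n(2g_0-2) + \sum_{i=1}^{s}\left(n-\frac{n}{\lambda_i}\right),
\]
which together with (i), (ii) and the $\lambda_i$ from (iii) forces $\mathcal{O}$ and $\mathcal{O}'$ to have the same underlying genus $g_0$ and the same labelled cone orders. Fix an orientation-preserving diffeomorphism $\bar h\colon \mathcal{O}\to \mathcal{O}'$ sending the $i$-th cone point of $\mathcal{O}$ to the $i$-th cone point of $\mathcal{O}'$.

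Next I would encode the two covers by their classifying homomorphisms $\rho,\rho'\colon\pi_1^{\mathrm{orb}}\to\mathbb{Z}/n$. In the standard presentation with handle generators $a_j, b_j$ ($1\le j\le g_0$) and cone generators $x_i$ subject to $\prod_j[a_j,b_j]\prod_i x_i=1$ and $x_i^{\lambda_i}=1$, the value $\rho(x_i)=\nu_i\cdot(n/\lambda_i)\bmod n$ records the local rotation of $f$, where $\nu_i\theta_i\equiv 1\bmod \lambda_i$. Thus equality of valencies translates into $\rho(x_i)=\bar h^{*}\rho'(x_i)$ for every $i$, and what remains is to also match these homomorphisms on the handle generators.

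The main obstacle is this last matching, which cannot be arranged by $\bar h$ alone. Since $\rho-\bar h^{*}\rho'$ already vanishes on every $x_i$ and, by the orbifold relation, on $\prod_j[a_j,b_j]$, it defines a primitive homomorphism $H_1(\Sigma_{g_0};\mathbb{Z})\to \mathbb{Z}/n$. I would invoke two classical inputs: first, $\mathrm{Sp}(2g_0,\mathbb{Z})$ acts transitively on such homomorphisms of a fixed image subgroup (an elementary-divisors argument on $\mathbb{Z}/n$); second, every symplectic automorphism of $H_1(\Sigma_{g_0};\mathbb{Z})$ is induced by a mapping class of $\Sigma_{g_0}$ fixing the marked-point set (Dehn--Nielsen--Baer with punctures). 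Composing $\bar h$ with such a mapping class yields $\bar h_1$ with $\bar h_1^{*}\rho'=\rho$, and the equivariant lift $h\colon\Sigma_g\to \Sigma_g$ of $\bar h_1$ is then the desired conjugacy $hfh^{-1}=f'$.
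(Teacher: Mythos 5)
First, a remark on the comparison you asked for: the paper does not prove Theorem \ref{thm:Nielsen} at all --- it is quoted from Nielsen (and Ashikaga--Ishizaka, Hirose) as a known classification --- so there is no internal proof to measure your argument against. Judging the proposal on its own terms: the necessity direction and the Riemann--Hurwitz step pinning down the quotient genus $g_0$ and cone orders are fine, and recasting the problem as matching the classifying epimorphisms $\rho,\rho'\colon\pi_1^{\mathrm{orb}}\to\mathbb{Z}/n$ is the right framework. The gap is in the final matching step.

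Two things go wrong there. (a) The difference $\rho-\bar h^{*}\rho'$ need not be primitive (it can be zero, or have any image), and, more importantly, acting by $\psi\in \mathrm{Sp}(2g_0,\mathbb{Z})$ does not act on this difference in a usable way: precomposition changes $\bar h^{*}\rho'$ but you are not allowed to move $\rho$, so ``transitivity on the difference'' is not the statement you need. What you actually need is that $\rho$ and $\bar h^{*}\rho'$ lie in one orbit of the automorphisms of $\pi_1^{\mathrm{orb}}$ induced by mapping classes that preserve the cone data. (b) That orbit statement is false for the symplectic group alone. Take $g_0=1$, $n=2$, two cone points with $\rho(x_1)=\rho(x_2)=1$, and handle values $(\rho(a),\rho(b))=(1,0)$ versus $(0,0)$. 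Both data sets define connected genus-$2$ double covers of the torus branched at two points, both with total valency $[\,2,2\,;\,\frac{1}{2}+\frac{1}{2}\,]$, so the two involutions are conjugate by Nielsen's theorem; but the restrictions to the handle generators have different images in $\mathbb{Z}/2$ and hence lie in different $\mathrm{SL}(2,\mathbb{Z})$-orbits, so no composition with a mapping class acting only through $\mathrm{Sp}(2g_0,\mathbb{Z})$ can match them. The missing ingredient is the rest of the mapping class group of the marked quotient surface --- point-pushing maps, equivalently Dehn twists along curves encircling cone points --- whose effect is to add multiples of the cone values $\rho(x_i)$ to the handle values. Proving that this larger group acts transitively on epimorphisms with prescribed cone data is the real content of the sufficiency direction (it is carried out, for instance, in Edmonds' work on surface symmetries); as written, your proof omits it. Note also that in the situation the present paper actually uses ($g_0=0$, sphere quotients), there are no handle generators and this entire difficulty disappears, which is why the special case is easy.
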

	
	By Theorem \ref{thm:Nielsen}, we identify the total valency of a periodic diffeomorphism with its conjugacy class.
	We will use the following well-known observation of Nielsen.

		\begin{prop}[{\cite[Equation (4.6)]{Nielsen}}]
		\label{prop:Nielsen}
			The sum of valencies of all multiple orbits of a periodic diffeomorphism on $\Sigma_{g}$ is an integer.
		\end{prop}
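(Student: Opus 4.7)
The plan is to reduce the statement to an elementary relation in the abelianization of the orbifold fundamental group of the quotient. Let $F=\langle f\rangle\cong \mathbb{Z}/n$ act on $\Sigma_{g}$ and form the quotient orbifold $\mathcal{O}=\Sigma_{g}/F$. Its underlying surface has some genus $h$, with cone points $y_{1},\dots,y_{s}$ of orders $\lambda_{1},\dots,\lambda_{s}$ corresponding to the multiple orbits $Fx_{1},\dots,Fx_{s}$. The projection $\Sigma_{g}\to\mathcal{O}$ is a regular orbifold covering with deck group $F$, classified by a surjection
\[
\rho\colon \pi_{1}^{\mathrm{orb}}(\mathcal{O})\twoheadrightarrow F\cong \mathbb{Z}/n
\]
whose kernel is $\pi_{1}(\Sigma_{g})$.

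I would then use the standard presentation
\[
\pi_{1}^{\mathrm{orb}}(\mathcal{O})=\Bigl\langle a_{1},b_{1},\dots,a_{h},b_{h},q_{1},\dots,q_{s}\;\Big|\; q_{i}^{\lambda_{i}}=1,\ \prod_{j=1}^{h}[a_{j},b_{j}]\cdot\prod_{i=1}^{s}q_{i}=1\Bigr\rangle,
\]
where $q_{i}$ is represented by a small loop encircling $y_{i}$ once in the positive direction. The key step is to identify $\rho(q_{i})$ as a specific element of $\mathbb{Z}/n$. Locally the cover near $x_{i}$ looks like the disk map $z\mapsto z^{\lambda_{i}}$, so lifting the loop $q_{i}$ to $\Sigma_{g}$ produces an arc whose endpoints differ by the deck transformation that rotates the local chart at $x_{i}$ by $2\pi/\lambda_{i}$. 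Because $f^{n/\lambda_{i}}$ is by definition the clockwise $(2\pi\nu_{i}/\lambda_{i})$-rotation, the rotation by $2\pi/\lambda_{i}$ is realized by $f^{\theta_{i} n/\lambda_{i}}$, where $\theta_{i}\nu_{i}\equiv 1\pmod{\lambda_{i}}$. Hence $\rho(q_{i})=\theta_{i}n/\lambda_{i}\in\mathbb{Z}/n$.

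Finally, applying the homomorphism $\rho$ to the surface relation in $\pi_{1}^{\mathrm{orb}}(\mathcal{O})$ kills the commutators (since $\mathbb{Z}/n$ is abelian) and yields
\[
\sum_{i=1}^{s}\frac{\theta_{i}n}{\lambda_{i}}\equiv 0\pmod{n},
\]
which is precisely the assertion that $\sum_{i=1}^{s}\theta_{i}/\lambda_{i}\in\mathbb{Z}$.

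The main obstacle I expect is the bookkeeping of orientations and signs in the third step: one must verify that the loop $q_{i}$ dual to a counterclockwise encirclement of $y_{i}$ in $\mathcal{O}$ lifts to a deck transformation whose rotation angle is $+2\pi/\lambda_{i}$ rather than its inverse, so that the exponent appearing in $\rho(q_{i})$ is exactly $\theta_{i}n/\lambda_{i}$ and not $-\theta_{i}n/\lambda_{i}$ modulo $n$. Once this orientation convention is fixed consistently with the clockwise rotation convention used to define $\nu_{i}$ and $\theta_{i}$, the rest of the argument is purely formal.
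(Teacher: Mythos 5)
Your argument is correct, but note that the paper offers no proof of this proposition to compare against: it is quoted verbatim from Nielsen's Equation (4.6) and used as a black box. What you have written is the standard modern proof. The quotient $\Sigma_{g}/F$ is an orientable closed orbifold, the covering is classified by a surjection $\rho\colon\pi_{1}^{\mathrm{orb}}(\Sigma_{g}/F)\to\mathbb{Z}/n$ with torsion-free kernel, the local model $z\mapsto z^{\lambda_{i}}$ identifies $\rho(q_{i})$ with $\theta_{i}n/\lambda_{i}$ (the power of the stabilizer's generator rotating by $2\pi/\lambda_{i}$), and applying $\rho$ to the long relation kills the commutators and yields $\sum_{i}\theta_{i}n/\lambda_{i}\equiv 0\pmod n$. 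The one caveat you raise --- whether orientation conventions produce $\rho(q_{i})=\theta_{i}n/\lambda_{i}$ or its negative --- is genuinely the delicate point if one wants the signed congruence, but it is immaterial for the statement at hand: replacing each $\theta_{i}$ by $\lambda_{i}-\theta_{i}$ changes the sum $\sum_{i}\theta_{i}/\lambda_{i}$ to $s-\sum_{i}\theta_{i}/\lambda_{i}$, so integrality of one is equivalent to integrality of the other. Hence your proof is complete as a proof of the proposition as stated, and it supplies a self-contained justification for something the paper merely cites.
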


	        \begin{rem}
       		\label{rem:inv}
        			Any multiple orbit of an involution is a fixed point whose valency is $\frac{1}{2}$.
			Thus, by Proposition \ref{prop:Nielsen}, the number of fixed points of an involution is even.
	        \end{rem}

	One can define powers of the total valency of a periodic diffeomorphism $f$ by the total valency of powers of $f$.
	Note that the composite of two periodic diffeomorphisms may not be periodic, and the product of total valencies does not make sense in general.

	A mapping class $\phi$ is called hyperelliptic if it commutes with the mapping class of a hyperelliptic involution.
	A periodic hyperelliptic mapping class $\phi$ is lifted to a periodic diffeomorphism of the same order which commutes with the hyperelliptic involution by Kerckhoff's theorem \cite[Theorem 5]{Kerckhoff}.
	Ishizaka determined total valencies of hyperelliptic periodic mapping classes.

		\begin{thm}[{\cite[Lemma 1.3]{Ishizaka}}]
		\label{thm:Ishizaka1}
			Let $\phi$ be a hyperelliptic periodic mapping class of $\Sigma_{g}$ and $[\phi]$ be its conjugacy class.
			Then, there exists a positive integer $k$ such that the conjugacy class of $\phi$ is equal to one of the followings:
				\begin{enumerate}
				\renewcommand{\labelenumi}{(\roman{enumi})}
					\item $[\phi_1^k] = [\, g,4g + 2\, ; \, \frac{1}{4g+2} + \frac{g}{2g+1} + \frac{1}{2} \,] ^k$,
					\item $[\phi_2^k] = [\, g,4g\, ; \, \frac{1}{4g} + \frac{2g-1}{4g} + \frac{1}{2} \,] ^k$,
					\item $[\phi_3^k] = [\, g,2g + 2\, ; \, \frac{1}{2g+2} + \frac{1}{2g+2} + \frac{g}{g+1} \,] ^k$,
					\item $[I\phi_3^{k}]$.
				\end{enumerate}
		\end{thm}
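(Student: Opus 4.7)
The plan is to combine Theorem \ref{thm:vis} with a direct valency computation. First, by Kerckhoff's theorem I would lift $\phi$ to an order-$n$ periodic diffeomorphism $f \in \opn{Diff}_{+}(\Sigma_{g})$ commuting with a hyperelliptic involution $I$. Theorem \ref{thm:vis} then lets me conjugate so that $f \in G_{i}$ for some $i \in \{1,2,3\}$. Since $G_{1}=\langle f_{1}\rangle$ and $G_{2}=\langle f_{2}\rangle$ are cyclic of orders $4g+2$ and $4g$ with $I = f_{1}^{2g+1}$ and $I = f_{2}^{2g}$ respectively, any $f \in G_{i}$ ($i=1,2$) is a power $f_{i}^{k}$, and $[\phi]=[\phi_{i}^{k}]$, yielding (i) and (ii). For $G_{3}$, Figure \ref{fig:phi3} shows that $I$ (the $\pi$-rotation of each fundamental domain of $f_{3}$) is not a power of $f_{3}$, so $G_{3}=\langle f_{3}\rangle \times \langle I\rangle$ has order $4g+4$ with elements $\{f_{3}^{k},\,I f_{3}^{k}\}$, yielding (iii) and (iv).

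Next I would verify the explicit valencies of $\phi_{1},\phi_{2},\phi_{3}$ in (i)--(iii). Each $f_{i}$ is induced from a rotation by $2\pi/n_{i}$ of a regular polygon about its barycenter, so one multiple orbit sits at the barycenter with $\lambda=n_{i}$, rotation $2\pi/n_{i}$, and valency $1/n_{i}$. I would identify the remaining multiple orbits by inspecting the edge identifications in Figures \ref{fig:phi1}--\ref{fig:phi3}: in each case two further orbits appear, coming from the common image of the polygon vertices and from points with small isotropy forced by the involutions in $\langle f_{i},I\rangle$. In cases (i) and (ii) the valencies at the barycenter and at the $\lambda=2$ orbit (pointwise fixed by $I=f_{i}^{n_{i}/2}$) are directly visible as $1/n_{i}$ and $1/2$, and Proposition \ref{prop:Nielsen} then pins down the remaining valency as the unique fraction making the total sum an integer, giving $g/(2g+1)$ and $(2g-1)/(4g)$ respectively. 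In case (iii) the vertex-class fixed point again has the obvious rotation $2\pi/(2g+2)$ (hence valency $1/(2g+2)$), so integrality forces the third valency to be $g/(g+1)$. Passage from $f_{i}$ to $f_{i}^{k}$ is immediate: raising to the $k$-th power multiplies each local rotation number by $k$ modulo its isotropy order, producing the notation $[\phi_{i}^{k}]$.

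The main obstacle I expect is the combinatorics of the edge identifications, which determines how the polygon vertices collapse on $\Sigma_{g}$ and hence the isotropy orders $\lambda_{j}$ of the non-barycenter multiple orbits. Once the $\lambda_{j}$ and the two obvious rotation numbers are in hand, the rest is uniform: Proposition \ref{prop:Nielsen} fixes the single remaining valency in each case, and case (iv) for $I\phi_{3}^{k}$ requires no further computation since it is listed as an unspecified conjugacy class.
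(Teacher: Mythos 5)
Your proposal is correct and follows essentially the same route as the paper: the paper proves Theorem \ref{thm:Ishizaka1} in the course of proving Theorem \ref{thm:vis} (via Kerckhoff lifting, the reduction to the three quotient types $S^2(4g,4g,2)$, $S^2(4g+2,2g+1,2)$, $S^2(2g+2,2g+2,2)$, and Lemmas \ref{lem:2} and \ref{lem:3k}), so deriving it from Theorem \ref{thm:vis} together with the valency computation of Lemma \ref{lem:123} and the structure of $G_1$, $G_2$, $G_3$ is just a repackaging of that argument, and your use of Proposition \ref{prop:Nielsen} to pin down the remaining valency is the same integrality trick the paper uses in Lemma \ref{lem:2}. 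The one point to be careful about is case (iii): your shortcut needs \emph{two} of the three local rotation numbers of $f_3$ as input, and the one at the second index-$(2g+2)$ fixed point is not ``directly visible'' from the rotation alone but requires the same fundamental-domain tracking that the paper carries out for the period-$2$ orbit of $f_1$ in Figure \ref{fig:f1val2}.
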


	Let us prove Theorem \ref{thm:vis}.
	We also give an alternative proof of Theorem \ref{thm:Ishizaka1}.
	First we see the total valency of $f_i$ is equal to that of $\phi_{i}$ for $i=1, 2$ and $3$.
	
		\begin{lem}\label{lem:123}
			We have
				\begin{enumerate}
					\item $[f_1] = [\,g, 4g+2\,;\, \frac{1}{4g+2}+ \frac{g}{2g+1}+\frac{1}{2} \,]$,
					\item $[f_2] = [\,g, 4g\,;\, \frac{1}{4g}+ \frac{2g-1}{4g}+\frac{1}{2} \,]$,
					\item $[f_3] = [\,g, 2g+2\,;\, \frac{1}{2g+2}+ \frac{1}{2g+2}+\frac{g}{g+1} \,]$.
				\end{enumerate}
			Here, we denote by $[f]$ the conjugacy class of a periodic diffeomorphism $f$, which is identified with its total valency by Theorem \ref{thm:Nielsen}.
		\end{lem}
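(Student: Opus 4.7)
The plan is to identify the multiple orbits of each $f_i$ directly from the polygon description, determine their isotropy orders $\lambda_j$, and then compute the local rotation indices $\nu_j$ and hence the valencies $\theta_j/\lambda_j$.

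For each $i$, the barycenter is a fixed point of $f_i$, at which $f_i$ acts as the clockwise $\frac{2\pi}{n_i}$-rotation; this contributes an orbit with $\lambda = n_i$ and valency $\frac{1}{n_i}$, since $\nu = \theta = 1$. The remaining two orbits arise from the two $\langle f_i \rangle$-orbits of polygon vertices: because the generating rotation equals a rotation by two vertex positions, the polygon vertices split into two $\langle f_i \rangle$-orbits of size $n_i$ each. Under the edge-identification rule---the clockwise $\frac{2k\pi}{n_i}$-rotations of the base pair $\alpha$---the vertices in each polygon orbit collapse onto a single $\langle f_i \rangle$-orbit on $\Sigma_{g}$. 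Counting surface vertices via Euler characteristic (which yields $2g+3$, $2g+1$, and $3$ surface vertices for $i=1,2,3$ respectively) determines the size of each surface orbit and hence the isotropy orders $\lambda_j$, producing the expected pairs $(2g+1,2)$, $(4g,2)$, and $(2g+2,g+1)$.

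With the $\lambda_j$ in hand, $\nu_j$ is read off from the local rotation of the isotropy generator $f_i^{n_i/\lambda_j}$ on a neighborhood of a representative surface point, using the cyclic order in which the glued polygon corners appear around that point, as prescribed by the edge-pasting rule; one then sets $\theta_j \equiv \nu_j^{-1} \pmod{\lambda_j}$. As a consistency check I will verify that $\sum_j \theta_j/\lambda_j \in \mathbb{Z}$, as demanded by Proposition \ref{prop:Nielsen}, and that the Riemann--Hurwitz formula with quotient $S^2$ recovers the genus $g$; both hold for the claimed valencies.

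The main obstacle will be the local rotation computation at the non-central orbits, where the cyclic order of polygon corners around a surface point need not coincide with their cyclic order along the polygon boundary. I plan to handle this case by case by chasing a single polygon corner under the edge-pasting rule to identify the next corner meeting it on $\Sigma_{g}$, and iterating until the full cyclic order is recovered; this reduces each valency computation to a finite combinatorial check involving the clockwise $\frac{2k\pi}{n_i}$-rotation rule.
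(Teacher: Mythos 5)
Your proposal is correct and follows essentially the same route as the paper: the central fixed point is read off immediately, the orbit sizes of the identified polygon vertices are determined (the paper reads them from the figure where you use an Euler characteristic count, with matching results), and the nontrivial valencies are obtained by chasing the cyclic order of glued polygon corners around a representative point to find the local rotation index and inverting it mod $\lambda$ — exactly the computation the paper carries out for the period-$2$ orbit of $f_1$, yielding $(2g-1)g\equiv 1 \bmod 2g+1$ and hence valency $\frac{g}{2g+1}$. The added Nielsen and Riemann--Hurwitz consistency checks are a sensible supplement but not part of the paper's argument.
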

	
	\begin{proof}
		We can compute the valency of each multiple orbit of $f_{i}$ by using Figures \ref{fig:phi1}, \ref{fig:phi2} and \ref{fig:phi3}.
		There are three multiple orbits of $f_1$, which are the vertices of tetragonal fundamental domains shown in Figure \ref{fig:phi1}.
		Clearly the valency of the fixed point at the barycenter of the $(8g+4)$-gon is $\frac{1}{4g+2}$.
		One of the other multiple orbits is of period $2g+1$, and hence its valency is $1/2$.
		We can compute the valency of the multiple orbit of $f_{1}$ of period $2$ as follows:
		Take a point $y$ in the orbit, and consider a loop which goes around the boundary of a small disk centered at $y$.
		The fundamental domains appear along this loop in the order
			\[
				1, \, 4g+2, \, 2g+1, \, 2g, \, 4g+1, \ldots, 2g+4, \, 3, \, 2, \, 2g+3, \, 2g+2
			\]
		(see Figure \ref{fig:f1val2}).
		The angle made by the fundamental domains marked with $1$ and $3$ is $\frac{2g-1}{2g+1} 2\pi$.
		Here, since we have
			\[
				(2g-1)g \equiv 2g^2 - g \equiv -2g \equiv 1 \ \mod 2g+1,
			\]
		the valency of this multiple orbit is $\frac{g}{2g+1}$.
		Similarly we can compute the total valencies of $f_{2}$ and $f_{3}$. 
	\end{proof}
		
		\begin{figure}[ht]
			\centering
			\includegraphics[width=0.4\textwidth ]{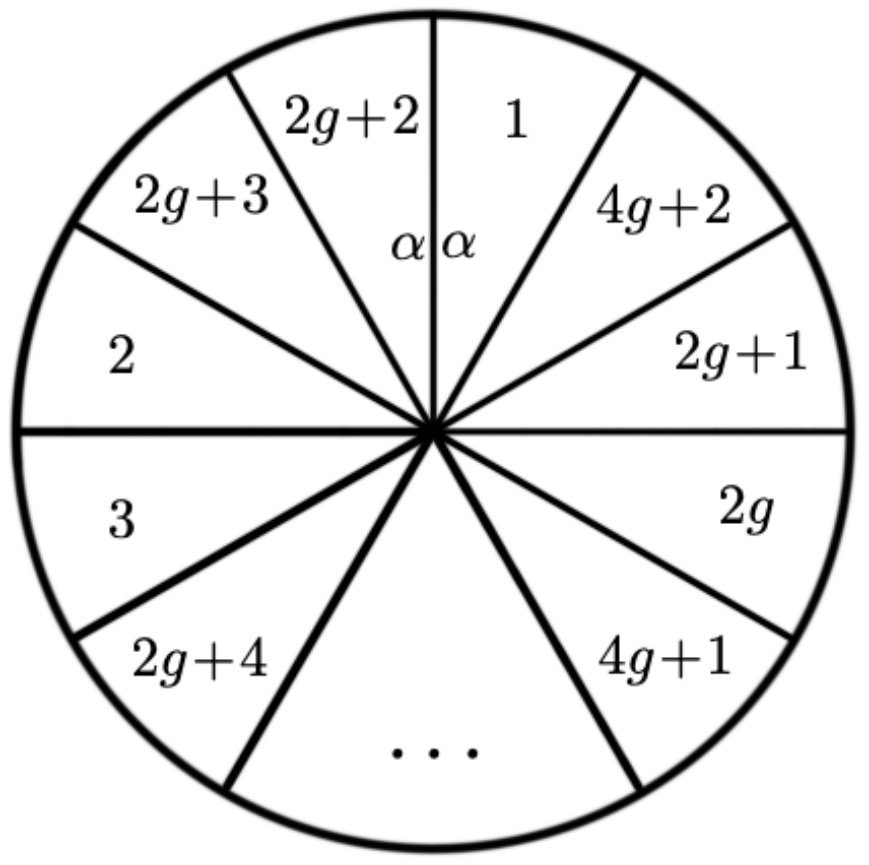}
			\caption{}
			\label{fig:f1val2}
		\end{figure}
	
		\begin{figure}[ht]
			\centering
			\begin{minipage}{0.33\textwidth}
				\centering
				\includegraphics[width=1\textwidth ]{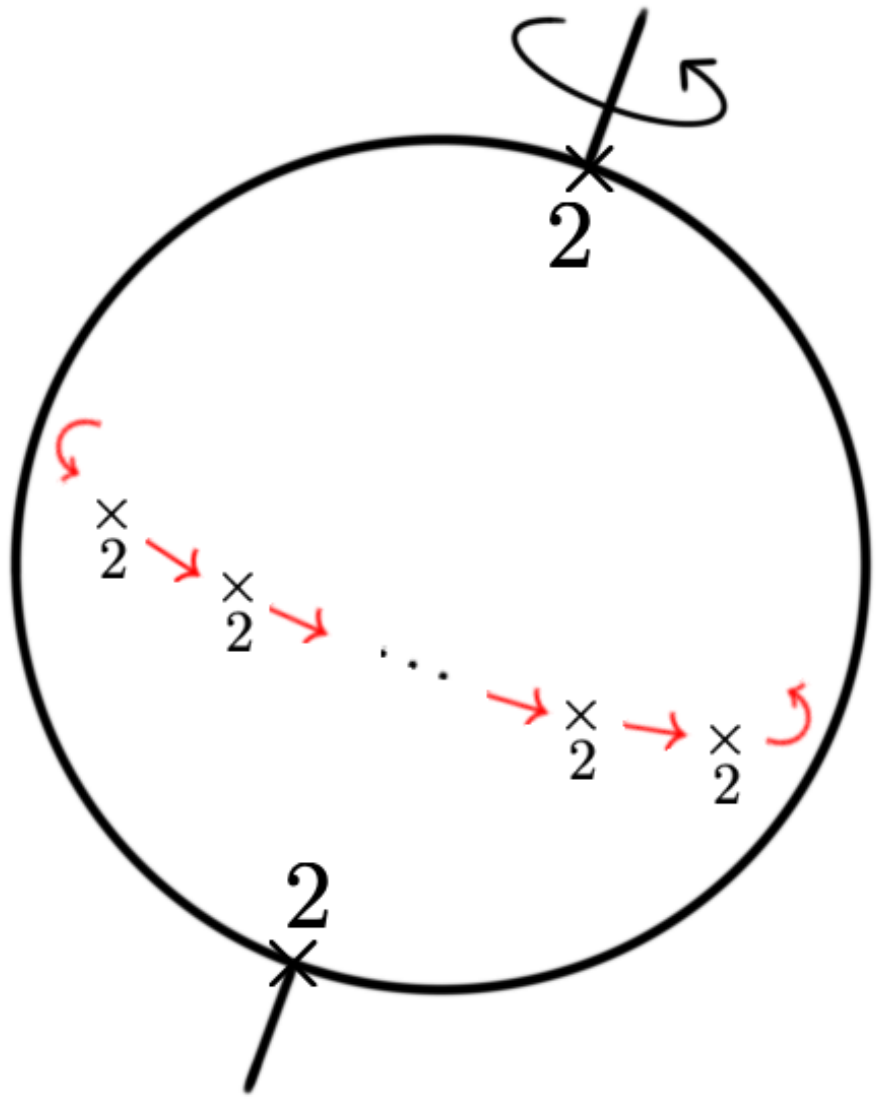}\\
				(i) The middle orbit consists of $2g$ cone points.
			\end{minipage}\hfill
			\begin{minipage}{0.33\textwidth}
				\centering
				\includegraphics[width=1\textwidth ]{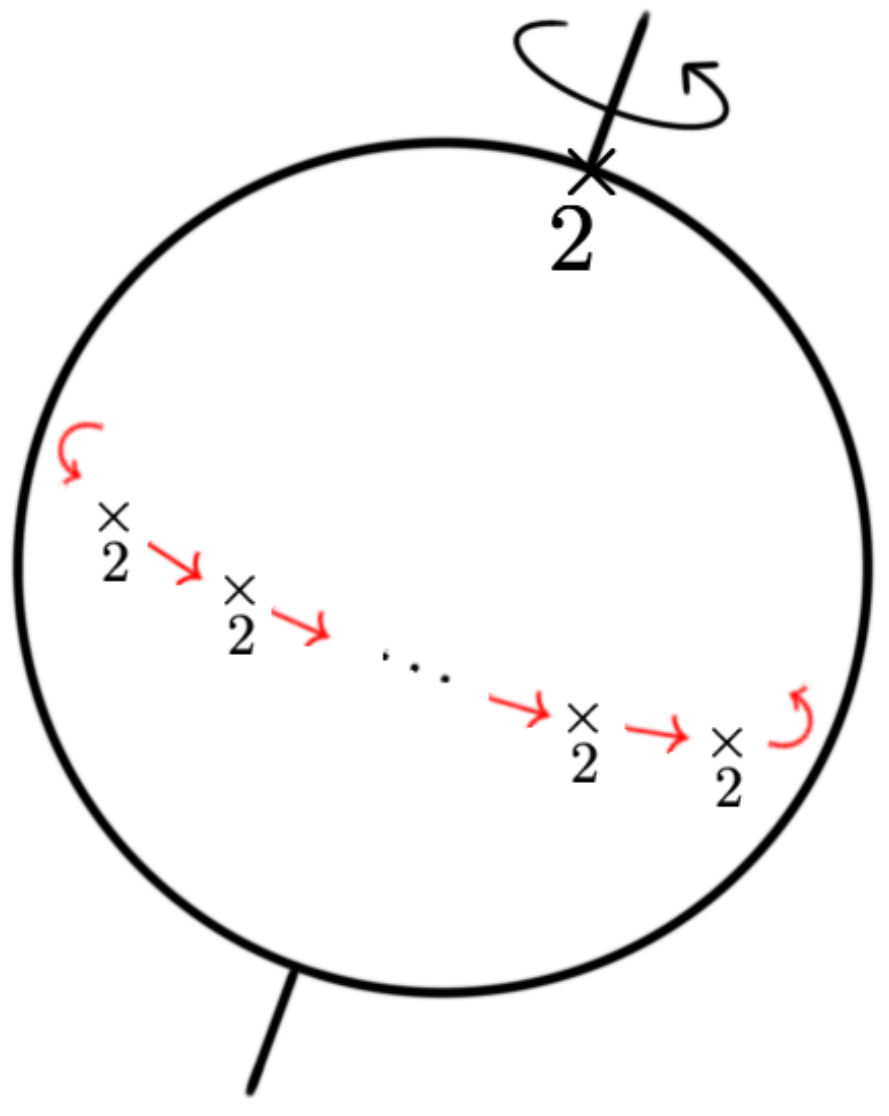}\\
				(ii) The middle orbit consists of $2g+1$ cone points.
			\end{minipage}\hfill
			\begin{minipage}{0.33\textwidth}
				\centering
				\includegraphics[width=1\textwidth ]{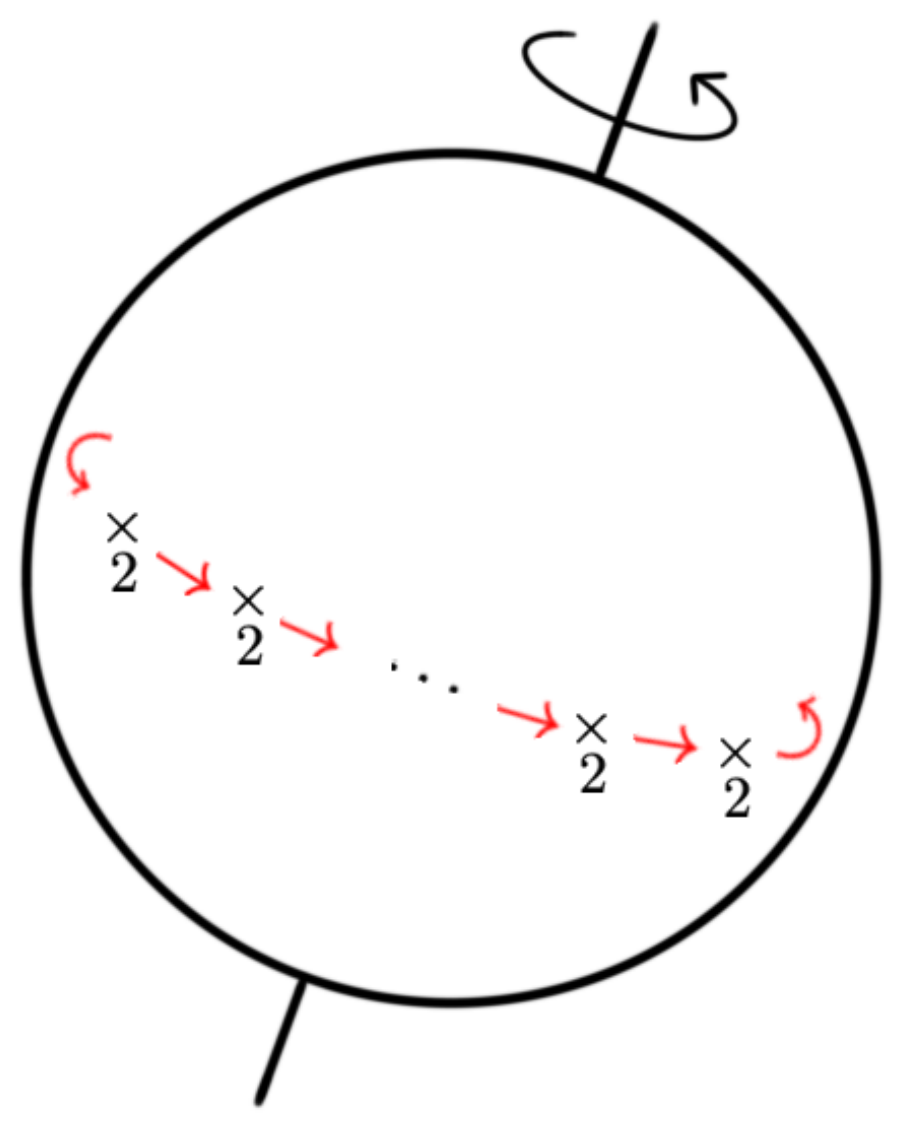}\\
				(iii) The middle orbit consists of $2g+1$ cone points.
			\end{minipage}
			\caption{\label{fig:rot}}
		\end{figure}
		
	Take any commuting pair of a periodic diffeomorphism $f$ and a hyperelliptic involution $I$ on $\Sigma_{g}$.
	Let $\bar{f}$ be a periodic diffeomorphism of $\Sigma_{g} / \langle I \rangle$ induced by $f$.
	Let $n=\opn{ord}(f)$ and $\bar{n}=\opn{ord}(\bar{f})$. We have the following simple observation.

		\begin{lem}
		\label{lem:1}
			\begin{enumerate}
				\item We have $n = 2\bar{n}$  if and only if $f^{\frac{n}{2}} = I$.
				\item We have $n = \bar{n}$  if and only if $f^{\frac{n}{2}} \neq I$.
			\end{enumerate}
		\end{lem}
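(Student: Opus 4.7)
The plan is to analyze the short exact sequence $1 \to \langle I \rangle \to \langle f, I \rangle \to \langle \bar f \rangle \to 1$ and deduce everything from the position of $\langle f \rangle$ inside $\langle f, I \rangle$. Since $I$ commutes with $f$ and $I^{2} = \mathrm{id}$, the diffeomorphism $f$ descends through the quotient $\Sigma_{g} \to \Sigma_{g}/\langle I \rangle$, and $f^{n} = \mathrm{id}$ forces $\bar f^{\,n} = \mathrm{id}$, so $\bar n \mid n$.

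Setting $H = \langle f, I \rangle \subseteq \opn{Diff}_{+}(\Sigma_{g})$, I would next observe that the induced projection $H \to \langle \bar f \rangle$ is surjective with kernel $\{\mathrm{id}, I\}$, because a hyperelliptic involution acts nontrivially on $\Sigma_{g}$ but trivially on the quotient $\Sigma_{g}/\langle I \rangle$. Hence $|H| = 2\bar n$. Since $\langle f \rangle \subseteq H$ has order $n$, we are forced into the alternative $n \in \{\bar n,\, 2\bar n\}$, and $n = 2\bar n$ occurs precisely when $I \in \langle f \rangle$.

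It remains to identify the condition $I \in \langle f \rangle$ with the equation $f^{n/2} = I$. If $n = 2\bar n$, then $\langle f \rangle$ is cyclic of even order $n$ and therefore contains a unique element of order $2$, namely $f^{n/2}$; since $I \in \langle f \rangle$ is of order $2$, this element must coincide with $I$. Conversely, $f^{n/2} = I$ trivially places $I$ in $\langle f \rangle$, yielding $n = 2\bar n$. This settles (1), and (2) is the complementary case of the same alternative: $I \notin \langle f \rangle$ is equivalent to $f^{n/2} \neq I$, with the convention that if $n$ is odd then $\langle f \rangle$ contains no element of order $2$ at all, so $I \notin \langle f \rangle$ automatically and we land in case~(2).

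I do not expect any real obstacle here; the entire lemma amounts to a parity check for the extension displayed at the start, together with the elementary fact that a cyclic group of even order has a unique involution.
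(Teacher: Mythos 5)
Your proposal is correct and rests on the same key fact as the paper's proof, namely that the only lifts to $\Sigma_{g}$ of $\operatorname{id}_{\Sigma_{g}/\langle I\rangle}$ are $\operatorname{id}_{\Sigma_{g}}$ and $I$ (you phrase this as the kernel of $\langle f,I\rangle \to \langle \bar f\rangle$ being $\{\operatorname{id},I\}$). The only substantive difference is that you make explicit, via Lagrange applied to $\langle f\rangle \subseteq \langle f,I\rangle$, the dichotomy $n \in \{\bar n, 2\bar n\}$, which the paper leaves implicit when it deduces part (2) from part (1) ``by contraposition''; this is a welcome bit of added rigor but not a different argument.
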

		
		\begin{proof}
			If $n = 2\bar{n}$, then we have $\bar{f}^{\frac{n}{2}} = \opn{id}_{\Sigma_{g}/\langle I \rangle}$.
			Note that the lifts of $\opn{id}_{\Sigma_{g} / \langle I \rangle}$ to $\Sigma_{g}$ are $I$ and $\opn{id}_{\Sigma_{g}}$.
			Therefore, since $n \neq \bar{n}$ by assumption, it follows that $f^{\frac{n}{2}} = I$.
			The converse is obvious.
			Here (2) follows from (1) by taking the contraposition.
		\end{proof}

	Let $S^2(\lambda_1, \ldots, \lambda_s)$ denote a sphere with $s$ cone points with indices $\lambda_1, \lambda_{2}, \ldots, \lambda_{s}$.
	Since $\Sigma_{g} / \langle I \rangle$ is diffeomorphic to $S^2(2,2,\ldots,2)$ with $2g+2$ cone points, it follows that $\bar{f}$ is a rotation which fixes two points on $S^2(2,2,\ldots,2)$.
	Thus, the cardinality of $\opn{Fix}( \bar{f} ) \cap B_I$ is equal to either of $0$, $1$ or $2$, where $B_{I}$ is the set of cone points of $S^2(2,2,\ldots,2)$.
	If $|\opn{Fix}( \bar{f} ) \cap B_I| = i$ for $i=0$, $1$ and $2$, then the set of other $2g+2-i$ cone points of $S^2(2,2,\ldots,2)$ are divided into $\bar{f}$-orbits.
	Thus, $\bar{f}$ is a power of a periodic diffeomorphism $\bar{h}$ of $S^2(2,2,\ldots,2)$ whose period is $2g+2-i$ (see Figure \ref{fig:rot}).
	Let $h$ be a lift of $\bar{h}$ to $\Sigma_{g}$.
	Here $h$ commutes with $I$, and $\langle f, I \rangle$ is a subgroup of $\langle h, I \rangle$.
	Therefore, by replacing $f$ with $h$, it is sufficient to consider the following three cases in order to prove Theorem \ref{thm:Ishizaka}:

		\begin{enumerate}
		\renewcommand{\labelenumi}{(\roman{enumi})}
			\item $\bar{f}$ is the $\frac{2\pi}{2g}$-rotation of $S^2(2,2,\ldots,2)$ and $\Sigma_{g} / G \cong S^2 (4g, 4g , 2)$,
			\item $\bar{f}$ is the $\frac{2\pi}{2g+1}$-rotation of $S^2(2,2,\ldots,2)$ and $\Sigma_{g} / G \cong S^2 (4g + 2, 2g + 1 , 2)$,
			\item $\bar{f}$ is the $\frac{2\pi}{2g+2}$-rotation of $S^2(2,2,\ldots,2)$ and $\Sigma_{g} / G \cong S^2 (2g + 2, 2g + 2 , 2)$.
		\end{enumerate}

	First, we consider the cases (i) and (ii).
	Since $G$ is cyclic in these cases as we will see, it is easy to compute the total valency of $f$. 		
	
		\begin{lem}
		\label{lem:2}
			\begin{enumerate}
				\item In the cases (i) and (ii), the group $G$ is cyclic.
				\item In the case (i), the total valency of $f$ is equal to the total valency of a power of $f_{2}$.
				\item In the case (ii), the total valency of $f$ is equal to the total valency of a power of $f_{1}$.
            		\end{enumerate}
		\end{lem}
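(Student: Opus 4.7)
The plan is to establish parts (1), (2), (3) together by exploiting the cyclicity of $G$.

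For part (1), since $G$ is abelian and contains $\langle f\rangle$ of order $n$ together with $\langle I\rangle$ of order $2$, Lemma \ref{lem:1} forces $|G| \in \{n, 2n\}$. The orbifold Euler characteristic of $\Sigma_{g}/G$ pins down $|G| = 4g$ in case (i) and $|G| = 4g+2$ in case (ii). In case (ii) the only non-cyclic possibility is $\mathbb{Z}/(2g+1)\times\mathbb{Z}/2$, which is in fact cyclic since $\gcd(2g+1,2)=1$. In case (i) the alternative $\mathbb{Z}/2g\times\mathbb{Z}/2$ has every element of order at most $2g$; but the cone point of orbifold order $4g$ in $\Sigma_{g}/G \cong S^{2}(4g,4g,2)$ requires an element of $G$ of order $4g$, which forces $G$ to be cyclic.

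For parts (2) and (3), let $h$ be a generator of $G$; since $f \in \langle h\rangle$, it suffices to show that $h$ is conjugate to a power of $f_{2}$ in case (i), and to a power of $f_{1}$ in case (ii). The total valency of $h$ has denominators prescribed by the quotient orbifold, the valency at the order-$2$ cone point is $\frac{1}{2}$, and the remaining numerators $\theta_{1},\theta_{2}$ are constrained by Nielsen's integrality condition (Proposition \ref{prop:Nielsen}) to satisfy $\theta_{1} + \theta_{2} \equiv 2g \pmod{4g}$ in case (i), and $\theta_{1} + 2\theta_{2} \equiv 2g+1 \pmod{4g+2}$ in case (ii), together with the coprimality of $\theta_{i}$ with the respective $\lambda_{i}$. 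These conditions do not determine $\theta_{1},\theta_{2}$ uniquely, so the key move is to pass from $h$ to the generator $h^{\theta_{1}}$ (legitimate since $\gcd(\theta_{1},|G|)=1$), whose first valency becomes $\frac{1}{\lambda_{1}}$ and whose second valency equals $\theta_{1}^{-1}\theta_{2} \bmod \lambda_{2}$.

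A short modular calculation then identifies $h^{\theta_{1}}$ with $f_{2}$ or $f_{1}$: in case (i), since $\theta_{1}$ is odd one gets $\theta_{1}^{-1}\theta_{2} \equiv 2g-1 \pmod{4g}$; in case (ii), using that $2^{-1}\equiv g+1 \pmod{2g+1}$ one gets $\theta_{1}^{-1}\theta_{2} \equiv g \pmod{2g+1}$. Combined with Lemma \ref{lem:123}, Nielsen's Theorem \ref{thm:Nielsen} then yields $h^{\theta_{1}}$ conjugate to $f_{2}$ in case (i) and to $f_{1}$ in case (ii); since $\langle h^{\theta_{1}}\rangle = \langle h\rangle \ni f$, it follows that $f$ is conjugate to a power of $f_{2}$ or of $f_{1}$, as desired.

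The main conceptual obstacle is that the quotient orbifold alone does not determine the total valency of $h$. What saves us is the freedom to reindex the generator of the cyclic group $G$ via $h \mapsto h^{\theta_{1}}$; once this viewpoint is in place the remaining arithmetic is routine.
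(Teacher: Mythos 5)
Your proof is correct and follows essentially the same route as the paper: cyclicity of $G$ via the order-$4g$ cone point in case (i) (and coprimality in case (ii)), then Nielsen's integrality condition (Proposition \ref{prop:Nielsen}) to pin down $\theta_2$ in terms of $\theta_1$, and comparison with Lemma \ref{lem:123} via Theorem \ref{thm:Nielsen}. Your normalization $h\mapsto h^{\theta_1}$ is a slightly cleaner packaging than the paper's direct identification of the total valency of $f$ with that of a power of $f_2$ (resp.\ $f_1$), since it avoids computing total valencies of powers, but the underlying arithmetic is identical.
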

		
		\begin{proof}
			Let us prove (1).
			In other words, we show that $f^{\frac{n}{2}} = I$ in the cases (i) and (ii).
			Assume that we have $f^{\frac{n}{2}} \neq I$ in the case (i).
			Since the isotropy group of any point of $\Sigma_{g}$ is cyclic, if $\Sigma_{g} /G \cong S^2 (4g, 4g, 2)$, then $G$ must have an element of order at least $4g$.
			However, since $n= \bar{n} =2g$ by Lemma \ref{lem:1}, we have $G \cong \mathbb{Z}/(2g)\mathbb{Z} \oplus \mathbb{Z}/2\mathbb{Z}$ and it is impossible.
			Similarly, we can show that $G$ is cyclic in the case (ii).

			Let us prove (2).
			Since $G$ is cyclic by (1), the total valency of $f$ is of the form $[\,g,4g\,;\, \frac{\theta_1}{4g}+ \frac{\theta_2}{4g}+ \frac{1}{2} \,]$ for some integers $\theta_{1}$, $\theta_{2}$.
			By Nielsen's condition (Proposition \ref{prop:Nielsen}), we have that $\frac{\theta_{1}}{4g}+ \frac{\theta_{2}}{4g}+ \frac{1}{2} = m$ for some integer $m$.
			Since $\theta_{1}$ is an odd number, we have
				\[
					\theta_{2} = 4mg - 2g - \theta_{1} \equiv (2g-1) \theta_{1} \mod 4g.
				\]
			By Lemma \ref{lem:123}, the total valency of $f$ is equal to the total valency of the $\theta_{1}$-th power of $f_{2}$.
			
			We can prove (3) in a way similar to the proof of (2):
			Since $G$ is cyclic by (1), the total valency of $f$ is of the form $[\,g,4g+2\,;\, \frac{\theta_1}{4g+2}+ \frac{\theta_2}{2g+1}+ \frac{1}{2} \,]$.
			By using Nielsen's condition as above, we can show that $\theta_{2} \equiv g\theta_{1} \mod 2g+1$.
			By Lemma \ref{lem:123}, the total valency of $f$ is equal to that of the $\theta_{1}$-th power of $f_{1}$.
		\end{proof}
		
		\begin{cor}
		　 The map $f_i^{n/2}$ is the unique hyperelliptic involution which commutes with $f_{i}$ for $i=1$ and $2$.
		\end{cor}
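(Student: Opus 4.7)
The plan is to combine Lemma~\ref{lem:1} with an elementary upper bound on the order of the induced rotation on the quotient sphere; this will force any hyperelliptic involution commuting with $f_i$ to lie inside $\langle f_i\rangle$, and hence to coincide with its unique element of order two, namely $f_i^{n/2}$.

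Concretely, I would let $I'$ be any hyperelliptic involution commuting with $f_i$, and consider the periodic diffeomorphism $\bar{f}_i$ of $\Sigma_g/\langle I'\rangle \cong S^2(2,\ldots,2)$ induced by $f_i$. Since $\bar{f}_i$ is an orientation-preserving periodic automorphism of $S^2$ preserving the set of $2g+2$ cone points, it is a rotation fixing exactly two points of $S^2$. Writing $i_0 \in \{0,1,2\}$ for the number of cone points lying on the rotation axis, the remaining $2g+2-i_0$ cone points form free $\langle\bar{f}_i\rangle$-orbits, and therefore $\bar{n} := \opn{ord}(\bar{f}_i)$ divides $2g+2-i_0$. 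In particular $\bar{n} \leq 2g+2$.

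For $i=1$ we have $n = 4g+2$, and for $i=2$ we have $n = 4g$; in both cases (with $g\geq 2$) the bound yields $\bar{n} \leq 2g+2 < n$, whence $\bar{n} \neq n$. By the contrapositive of Lemma~\ref{lem:1}(2), $f_i^{n/2} = I'$. Since $f_i^{n/2}$ is itself a hyperelliptic involution commuting with $f_i$ (as noted in the introduction), this gives the desired uniqueness.

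The main subtlety is the derivation of the bound $\bar{n} \leq 2g+2$; once the rotation structure of $\bar{f}_i$ on the orbifold $S^2(2,\ldots,2)$ is identified and the cone-point orbits are counted, the dichotomy already encoded in Lemma~\ref{lem:1} does the rest.
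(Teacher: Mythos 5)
Your argument is correct and is essentially the paper's (unwritten) proof of this corollary: the paper derives it from Lemma \ref{lem:2}(1), which likewise combines Lemma \ref{lem:1} with the rotation structure of $\bar{f}_i$ on $S^2(2,\ldots,2)$ to force $f_i^{n/2}=I'$. The only minor variation is how $\bar{n}=n$ is excluded --- you use the orbit-count bound $\bar{n}\mid 2g+2-i_0$ (which is already implicit in the paper's reduction to cases (i)--(iii)), while the proof of Lemma \ref{lem:2}(1) instead observes that $\mathbb{Z}/n\mathbb{Z}\oplus\mathbb{Z}/2\mathbb{Z}$ has no cyclic subgroup large enough to serve as the isotropy group of the cone point of $\Sigma_g/G$; both are elementary and rest on the same quotient-sphere picture.
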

	
	Theorem \ref{thm:vis} for the cases (i) and (ii) follows from Lemma \ref{lem:2} and Theorem \ref{thm:Nielsen}.
			
	Let us consider the case (iii).
	In this case, $G$ is a direct sum of two cyclic groups generated by $f$ and $I$ by Lemma \ref{lem:1}.
	We can compute the total valency of $f$ as follows.
	
		\begin{lem}
		\label{lem:3k}
			In the case (iii), we have the following.
				\begin{enumerate}
					\item We have $n=2g+2$.
					\item The map $\Sigma_{g} / F \to \Sigma_{g} / G$ is branched at two branch points of indices $2g+2$ and $2$, respectively.
						In particular, we have $\Sigma_{g} / F \cong S^2(2g+2, 2g+2, g+1)$.  
    					\item The total valency of $f$ is equal to the total valency of either $f_3^k$ or $I f_3^{k}$ for some integer $k$.
				 \end{enumerate}
		\end{lem}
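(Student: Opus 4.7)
The plan is to prove (1), (2), and (3) in order.

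Part (1) is immediate: in case (iii), we have $G = \langle f \rangle \oplus \langle I \rangle$, so $I \notin F$ and $f^{n/2} \neq I$; Lemma \ref{lem:1}(2) then yields $n = \bar n = 2g+2$.

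For (2), I would analyze the induced degree-$2$ orbifold cover $\pi \colon \Sigma_g/F \to \Sigma_g/G$ by determining, for each of the three cone points of $\Sigma_g/G = S^2(2g+2, 2g+2, 2)$, the $G$-stabilizer of a lift and hence the number and orbifold orders of the preimages in $\Sigma_g/F$. Since $F$ has index $2$ in $G$, a cone point $p$ with $G$-stabilizer $H_p$ has two unramified preimages when $H_p \subseteq F$ and one ramified preimage when $H_p \not\subseteq F$. The unique order-$2$ cone point arises from an $I$-fixed point, so $H_p = \langle I \rangle \not\subseteq F$ and $p$ is branched with a smooth preimage. Each of the two order-$(2g+2)$ cone points has a cyclic stabilizer of order $2g+2$ which is either $F$ itself (``type A'': two preimages, each of order $2g+2$) or a cyclic subgroup not contained in $F$, whose intersection with $F$ is the order-$(g+1)$ subgroup $\langle f^2 \rangle$ (``type B'': one preimage of order $g+1$). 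Using $\chi^{\mathrm{orb}}(\Sigma_g/F) = \chi(\Sigma_g)/|F| = (1-g)/(g+1)$ together with the requirement that the underlying genus of $\Sigma_g/F$ be a non-negative integer, a short arithmetic check excludes the configurations AA and BB and forces the mixed case AB, which gives $\Sigma_g/F \cong S^2(2g+2, 2g+2, g+1)$ and branching of $\pi$ at exactly two cone points of $\Sigma_g/G$, of indices $2g+2$ and $2$.

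For (3), by (2) we know the multiple-orbit structure of $f$: two fixed points with stabilizer $F$ and one $2$-orbit with stabilizer $\langle f^2 \rangle$ of order $g+1$. I would compute the rotation numbers at these orbits from the commuting relation $q \circ f = \bar f \circ q$, where $q \colon \Sigma_g \to \Sigma_g/\langle I \rangle$ is a local diffeomorphism near every lift of a pole of $\bar f$. Since $\bar f$ is the $\frac{2\pi}{2g+2}$-rotation, this determines the total valency of $f$ to have the form $[\,g, 2g+2 \,;\, \theta_1/(2g+2) + \theta_2/(2g+2) + \theta_3/(g+1)\,]$ for specific values of $\theta_i$. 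A direct comparison with the total valencies of $f_3^k$ (derived from Lemma \ref{lem:123}(3) via the rule $\nu \mapsto k\nu$ for rotation numbers under $k$-th powers) and of $If_3^k$ (computed analogously) then exhibits $f$ as conjugate to $f_3^k$ or $If_3^k$ by Theorem \ref{thm:Nielsen}.

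The hard part will be the case analysis in (2): enumerating the three possible branching configurations at the order-$(2g+2)$ cone points, computing the orbifold Euler characteristics, and using Riemann--Hurwitz to rule out all but the mixed case AB. Once the orbifold structure of $\Sigma_g/F$ is established, the remaining computations in (3) are essentially forced.
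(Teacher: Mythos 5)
There is a genuine gap in your part (1): the argument is circular. The direct-sum decomposition $G=\langle f\rangle\oplus\langle I\rangle$ is \emph{equivalent} to $I\notin F$, i.e.\ to $f^{n/2}\neq I$, i.e.\ (by Lemma \ref{lem:1}) to $n=\bar{n}$ --- which is exactly what part (1) asserts. Case (iii) is defined only by the conditions that $\bar{f}$ is the $\frac{2\pi}{2g+2}$-rotation and $\Sigma_{g}/G\cong S^2(2g+2,2g+2,2)$; nothing in that definition excludes $f^{n/2}=I$ and $n=4g+4$ a priori, so you cannot take the direct-sum structure as given. The actual content of (1) is ruling out $n=4g+4$, which the paper does by contradiction: if $n=4g+4$ then $G=F$ is cyclic of order $4g+4$ with quotient $S^2(2g+2,2g+2,2)$, so $F$ is generated by its isotropy subgroups, all of which have order dividing $2g+2$; hence $f^{2g+2}=\operatorname{id}$, contradicting $\operatorname{ord}(f)=4g+4$. (Equivalently: the four points of $\Sigma_g$ over the poles of $\bar{f}$ would have even isotropy order $2g+2$, hence be fixed by $I=f^{2g+2}$ and project to cone points of $S^2(2,\ldots,2)$, whereas in case (iii) the poles are regular points.) Some such argument must be supplied; without it the group structure you use throughout (2) and (3) is unproven.

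Parts (2) and (3) are sound. For (2), your Riemann--Hurwitz exclusion of the configurations AA and BB encodes the same parity constraint as the paper's argument, which instead invokes Remark \ref{rem:inv} (the deck involution of $\Sigma_g/F\to\Sigma_g/G$ has an even number of fixed points, so the cover is branched at $0$ or $2$ of the three cone points) together with the observation that the index-$2$ point is necessarily branched; both routes force exactly one of the two index-$(2g+2)$ points to be branched. For (3) you genuinely diverge from the paper: the paper never computes rotation numbers, but obtains $\theta_1=\theta_2=:k$ from the fact that $I$ interchanges the two $f$-fixed points lying over the unbranched cone point, and then pins down $\theta_3\equiv kg \pmod{g+1}$ by Nielsen's integrality (Proposition \ref{prop:Nielsen}) before matching against Lemma \ref{lem:123}. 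Your local-conjugation computation via $q\circ f=\bar{f}\circ q$ is legitimate (in case (iii) the poles of $\bar{f}$ are regular points of $S^2(2,\ldots,2)$, so $q$ is a local diffeomorphism near their lifts) and is in fact sharper, determining $k$ up to sign; the paper's route is softer and works uniformly for any power of the primitive rotation. Either way the comparison with $f_3^k$ and $If_3^k$ via Theorem \ref{thm:Nielsen} should be written out explicitly rather than asserted.
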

			
		\begin{proof}
			Let us prove (1).
			Since $\bar{n}=2g+2$, we have either $n=2g+2$ or $n=4g+4$.
			Assume that $n = 4g+4$.
			Since $G$ is cyclic by Lemma \ref{lem:1}, the total valency of $f$ is of the form $[\, g, 4g+4 \,;\, \frac{\theta_{1}}{2g+2}+ \frac{\theta_{2}}{2g+2}+ \frac{1}{2} \,]$ for some integers $\theta_{1}$, $\theta_{2}$.
			It follows that $f^{2g+2} = id_{\Sigma_{g}}$, which contradicts with $n=4g+4$.
			Therefore, by contradiction, we have $n = 2g+2$.

			Let us prove (2). By Nielsen's theorem  (see Remark \ref{rem:inv}), the double branched covering $\Sigma_{g}/F \to \Sigma_{g}/G$ is branched at an even number of points.
			Since $\Sigma_{g} / F \to \Sigma_{g} / G$ can be branched only at branch points of $\Sigma_{g} \to \Sigma_{g} / G$, which consists of $3$ points, it follows that $\Sigma_{g} / F \to \Sigma_{g} / G$ is branched at two points.
			Since the inverse image in $\Sigma_{g}$ of the branch point of index $2$ on $\Sigma_{g} / G$ is a principal orbit of $f$, we see that $\Sigma_{g} / F \to \Sigma_{g} / G$ is branched at the branch point of $\Sigma_{g} \to \Sigma_{g} / G$ of branch index $2$.
			It follows that $\Sigma_{g} / F \to \Sigma_{g} / G$ is branched at two branch points of indices $2g+2$ and $2$, respectively.
			Thus, $\Sigma_{g} / F$ is diffeomorphic to $S^2(2g+2, 2g+2, g+1)$ as an orbifold. 
				
			Let us prove (3).
			Let $h=If$ if $f$ interchanges two cone points of index $2g+2$, and $h=f$ otherwise.
			In both cases, $h$ fixes two cone points of index $2g+2$.
			Then, the total valency of $h$ has the form $[\, g, 2g+2 \,;\, \frac{\theta_1}{2g+2} + \frac{\theta_2}{2g+2} + \frac{\theta_3}{g+1} \,]$ for some integers $\theta_1$, $\theta_{2}$ and $\theta_{3}$.
			Here, since $I$ interchanges two cone points of index $2g+2$, we have $\theta_1=\theta_2$.
			Set $k=\theta_1=\theta_{2}$.
			By Proposition \ref{prop:Nielsen}, we have that $\frac{k}{2g+2} + \frac{k}{2g+2} + \frac{\theta_3}{g+1}$ is an integer.
			It follows that $\theta_3 \equiv -k \equiv kg \mod g+1$.
			By Lemma \ref{lem:123}, the total valency of $h$ is equal to the total valency of the $k$-th power of $f_{3}$.
		\end{proof}
			
		\begin{lem}
		\label{lem:I}
			Any hyperelliptic involution $I'$ which commutes with $f_3$ is conjugate to $I$ in Figure \ref{fig:phi3} by a conjugacy which commutes with $f_{3}$.
		\end{lem}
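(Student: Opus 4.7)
The plan is to conjugate at the level of the quotient orbifold $\tilde{X} := \Sigma_{g} / \langle f_{3} \rangle$, which by Lemma~\ref{lem:3k} is isomorphic to $S^{2}(2g+2, 2g+2, g+1)$, and then lift the conjugation to $\Sigma_{g}$. Write $q_{1}, q_{2}$ for the cone points of order $2g+2$ and $q_{3}$ for the cone point of order $g+1$. Since $I$ and $I'$ commute with $f_{3}$, they descend to orientation-preserving involutions $\bar{I}, \bar{I'}$ of $\tilde{X}$. I would first observe that both involutions swap $q_{1}$ and $q_{2}$, fix $q_{3}$, and have exactly one additional smooth fixed point each, say $q_{0}$ and $q_{0}'$: an orientation-preserving involution of $S^{2}$ has exactly two fixed points, the unique cone point $q_{3}$ must be fixed, and fixing $q_{1}$ and $q_{2}$ simultaneously would produce a third fixed point, a contradiction.

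Next, I would construct an orientation-preserving orbifold diffeomorphism $\bar{h}$ of $\tilde{X}$ carrying $(q_{1}, q_{2}, q_{3}, q_{0})$ to $(q_{1}, q_{2}, q_{3}, q_{0}')$; this is possible by path-connectedness of the relevant configuration space on $S^{2}$. Then $\bar{h}$ conjugates the unique $\pi$-rotation of $S^{2}$ with axis $q_{3} q_{0}$ to that with axis $q_{3} q_{0}'$, so $\bar{h} \bar{I} \bar{h}^{-1} = \bar{I'}$. I would then lift $\bar{h}$ to an orientation-preserving $h \in \opn{Diff}_{+}(\Sigma_{g})$, using that the monodromies of the cover $\Sigma_{g} \to \tilde{X}$ at $q_{1}$ and $q_{2}$ both equal $f_{3}$ (so that swapping these cone points preserves the covering). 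To see that $h$ centralizes $f_{3}$, write $h f_{3} h^{-1} = f_{3}^{k}$ and compare the local actions at a fixed point of $f_{3}$: since $h$ is orientation-preserving and $f_{3}$ acts as rotation by $2\pi/(2g+2)$ at each of its fixed points, the conjugate must rotate by the same angle, forcing $k \equiv 1 \pmod{2g+2}$.

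It remains to upgrade $\bar{h} \bar{I} \bar{h}^{-1} = \bar{I'}$ to the identity $h I h^{-1} = I'$ upstairs. Now $h I h^{-1}$ is an involutive lift of $\bar{I'}$, and since $(f_{3}^{a} I')^{2} = f_{3}^{2a}$ forces $a \in \{0, g+1\}$, the only involutive lifts in the coset $I' F$ are $I'$ and $f_{3}^{g+1} I'$. As $h I h^{-1}$ is conjugate to the hyperelliptic $I$, it has $2g+2$ fixed points, so it suffices to rule out $f_{3}^{g+1} I'$ by showing it has fewer. For this, I would use that the hyperellipticity of $I'$ forces $I'$ to act as the identity on the $(2g+2)$-point fiber above $q_{0}'$ and as a swap on the two-point fiber above $q_{3}$; hence $f_{3}^{g+1} I'$ translates the former fiber freely via $f_{3}^{g+1} \in F$ (no fixed points) and acts as either the identity or a swap on the latter according to the parity of $g$, yielding at most two fixed points in total, strictly fewer than $2g+2$. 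The main obstacle is this final fixed-point bookkeeping: distinguishing between the two involutive lifts of $\bar{I'}$ requires tracking the action of both $I'$ and $f_{3}^{g+1}$ on individual fibers of $\Sigma_{g} \to \tilde{X}$, which is where the full $F$-action on $\Sigma_{g}$ enters decisively.
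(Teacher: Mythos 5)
Your overall strategy---descend to a quotient orbifold, build the conjugating map there, lift it back to $\Sigma_{g}$, and then fix up the ambiguity among lifts---is the same as the paper's, but one step does not hold as written: the assertion that $\bar{h}\,\bar{I}\,\bar{h}^{-1}=\bar{I'}$ once $\bar{h}$ matches up the marked points $(q_{1},q_{2},q_{3},q_{0})$ with $(q_{1},q_{2},q_{3},q_{0}')$. An orientation-preserving smooth involution of $S^{2}$ is \emph{not} determined by its fixed-point set: if $\psi$ is any diffeomorphism fixing $q_{1},q_{2},q_{3},q_{0}$ that does not commute with $\bar{I}$, then $\psi\bar{I}\psi^{-1}$ is a different involution with exactly the same fixed points and the same cone-point behaviour. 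So there is no ``unique $\pi$-rotation with axis $q_{3}q_{0}$'' in the diffeomorphism category, and matching the four marked points does not force the two involutions to agree. To close this you must either invoke the linearization theorem for finite-order diffeomorphisms of $S^{2}$ (every such is smoothly conjugate to a rotation) and then arrange the linearizing coordinates to respect the cone points, or---more cheaply, and this is what the paper does---descend one step further: both quotients $\tilde{X}/\langle\bar{I}\rangle$ and $\tilde{X}/\langle\bar{I'}\rangle$ are orbifolds $S^{2}(2g+2,2g+2,2)$, a diffeomorphism between them matching the distinguished cone points lifts through the two double covers to a diffeomorphism $\bar{h}$ of $\tilde{X}$, and such an $\bar{h}$ conjugates the deck involution $\bar{I}$ to the deck involution $\bar{I'}$ \emph{automatically}. (The paper phrases this with $\Sigma_{g}/G$ and $\Sigma_{g}/G'$ and lifts twice, to $\Sigma_{g}/F$ and then to $\Sigma_{g}$.)

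Apart from this, your argument is sound and in places more careful than the paper's. The local-rotation-number argument showing that the lift $h$ centralizes $f_{3}$, rather than merely normalizing $F=\langle f_{3}\rangle$, is genuinely needed and the paper leaves it implicit. More substantially, your final paragraph addresses a point the paper's proof skips: any lift of the quotient conjugacy only yields $hIh^{-1}\in\{I',\,f_{3}^{g+1}I'\}$, and replacing $h$ by $f_{3}^{j}h$ does not change $hIh^{-1}$ (which centralizes $F$), so the ambiguity must be resolved by hand. Your fixed-point count does this correctly: $I'$ acts on the free $F$-orbit over $q_{0}'$ by an $F$-equivariant translation of order dividing $2$, hence fixes it pointwise or freely, and having $2g+2$ fixed points forces ``pointwise''; consequently $f_{3}^{g+1}I'$ moves that whole fiber and has at most $2$ fixed points, so it cannot equal the conjugate of the hyperelliptic $I$. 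With the single repair above, the proof goes through.
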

			
		\begin{proof}
		    Let $G=G_{3} = \langle f_{3}, I \rangle$.
		    Let $G' = \langle f_{3}, I'\rangle$.
		    By Lemma \ref{lem:3k}-(2), we have $\Sigma_{g}/G \cong \Sigma_{g}/G' \cong S^{2}(2g+2,2g+2,g+1)$. 
		    Among two cone points of $\Sigma_{g}/G$ (resp.\ $\Sigma_{g}/G'$) of index $2g+2$, let $y$ (resp.\ $y'$) be the point in the branched locus of $\Sigma_{g} / F \to \Sigma_{g} / G$ (resp.\ $\Sigma_{g} / F \to \Sigma_{g} / G'$).
		    Then there exists a diffeomorphism $k_{G} : \Sigma_{g}/G \to \Sigma_{g}/G'$ such that $k_{G}(y)=y'$. 
			We can lift $k_{G}$ to a diffeomorphism $k_{F} : \Sigma_{g}/F \to \Sigma_{g}/F$.
			Since $k_{F}$ preserves the valency of each multiple orbit of $f$, we can lift $k_{F}$ to a diffeomorphism $k : \Sigma_{g} \to \Sigma_{g}$.
			By construction, we have $k_{F} \circ \bar{I} = \bar{I}' \circ k_{F}$, where $\bar{I}$, $\bar{I}'$ are diffeomorphisms of $\Sigma_{g}/F$ induced by $I$, $I'$, respectively.
			Then we have $k \circ f = f \circ k$ and $k \circ I = I' \circ k$, which concludes the proof.
		\end{proof}
				
	Theorem \ref{thm:vis} for the case (iii) follows from Lemmas \ref{lem:3k} and \ref{lem:I}.
			
		\begin{rem}
			We can see that $If_3$ is conjugate to $f_3^{2g+1}$ by computing the total valency of $If_{3}$.
			In particular, if $k$ is odd, then $If_3^k$ is conjugate to $f_3^{(2g+1)k}$.
			It means that we do not need to include $If_3^k$ in (iv) of Theorem \ref{thm:Ishizaka1} for odd $k$.
			Similarly we can see that $If_{3}^2$ is not conjugate to $f_{3}^{k}$ for any $k$ by using the total valency.
		\end{rem}

\section{Dehn twist presentations in the pointed mapping class group}
\label{sec:3}

	In this section, we will obtain right-handed Dehn twist presentations of the diffeomorphisms $f_{1}$, $f_{2}$ and $f_{3}$ in the pointed mapping class group to prove Theorem \ref{thm:Dehn} (see Figures \ref{fig:phi1}, \ref{fig:phi2} and \ref{fig:phi3} for the diffeomorphisms $f_{1}$, $f_{2}$ and $f_{3}$, respectively).
	For the purpose, we use the standard $\opn{Mod}(\Sigma_{g}, x)$-action on $\pi_{1}(\Sigma_{g},x)$, which is well known to be faithful (see, e.g. \cite[Chapter 8]{FM}).

		\begin{figure}[ht]
			\centering
			\begin{tabular}{ccc}
			\begin{minipage}{0.33\textwidth}
				\centering
				\captionsetup{width=0.85\linewidth}
				\includegraphics[width=1\textwidth ]{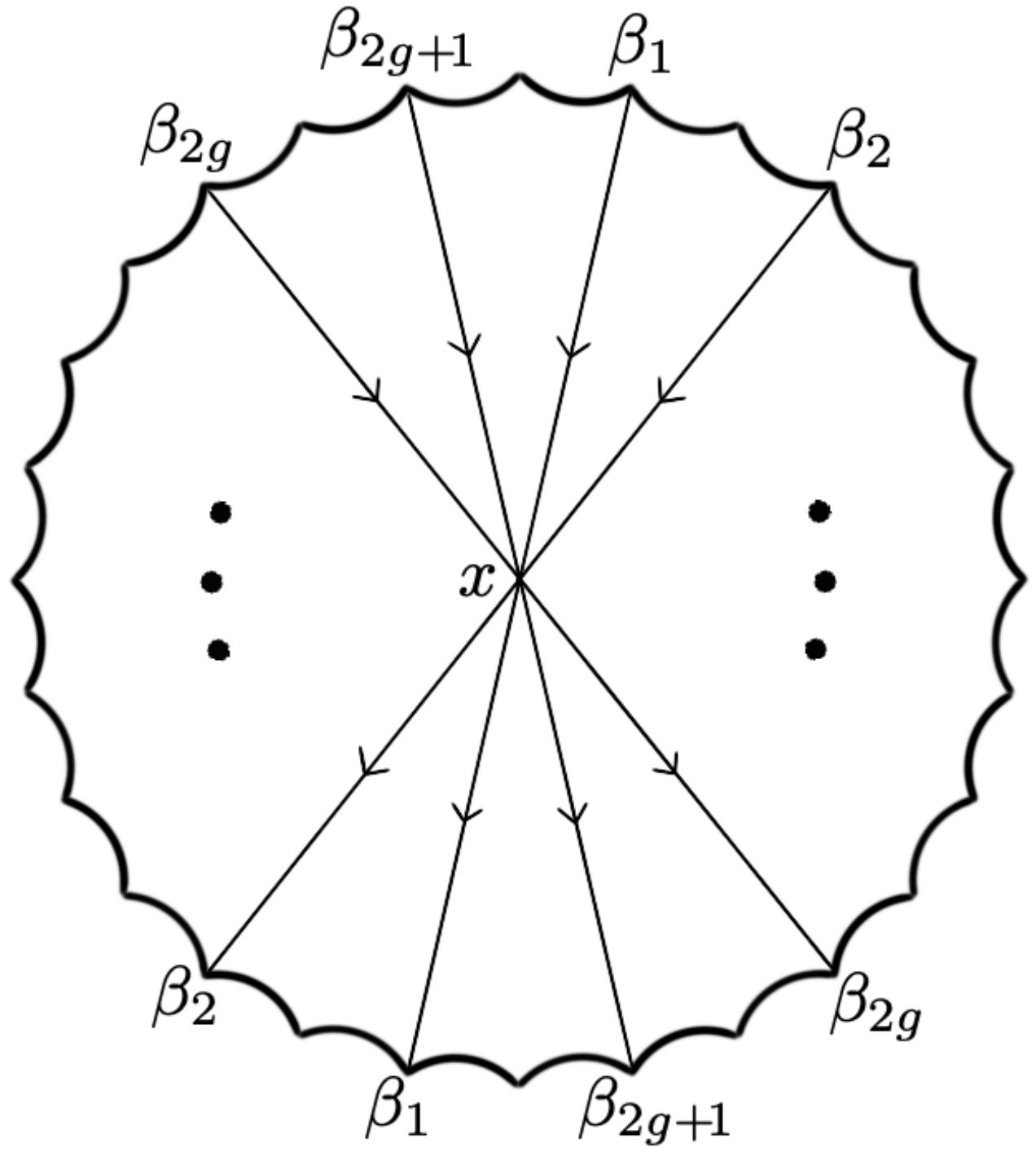}
				\caption{A system of generators of $\pi_1(\Sigma_{g})$ for $\Sigma_{g}$ in Fig.\ \ref{fig:phi1}}
				\label{fig:phi1-3}
			\end{minipage}\hfill
			\begin{minipage}{0.33\textwidth}
				\centering
				\captionsetup{width=0.85\linewidth}
				\includegraphics[width=1\textwidth ]{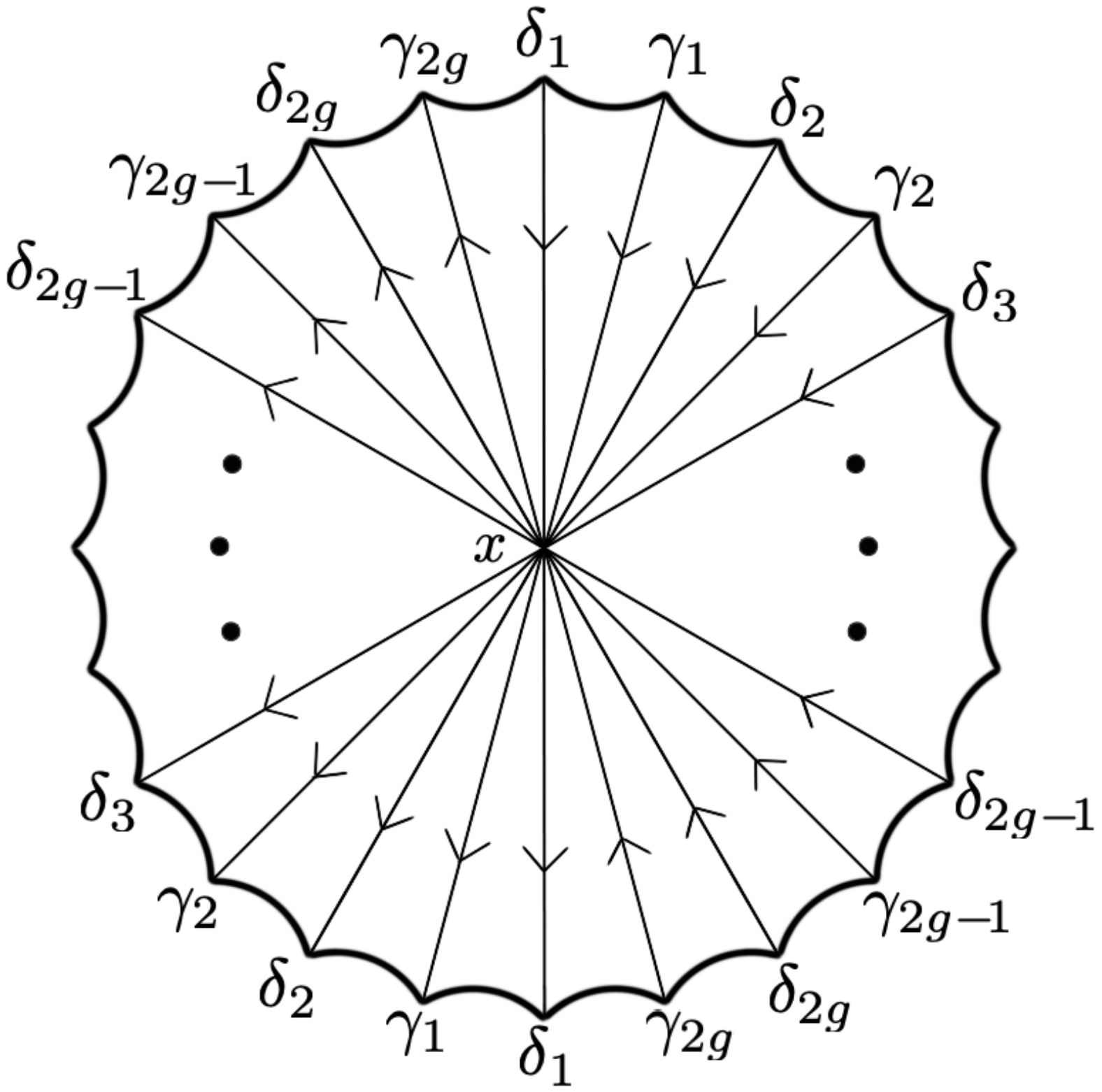}
				\caption{A system of generators of $\pi_1(\Sigma_{g})$ for $\Sigma_{g}$ in Fig.\ \ref{fig:phi2}}
				\label{fig:phi2-3}
			\end{minipage}\hfill
			\begin{minipage}{0.33\textwidth}
				\centering
				\captionsetup{width=0.85\linewidth}
				\includegraphics[width=1\textwidth ]{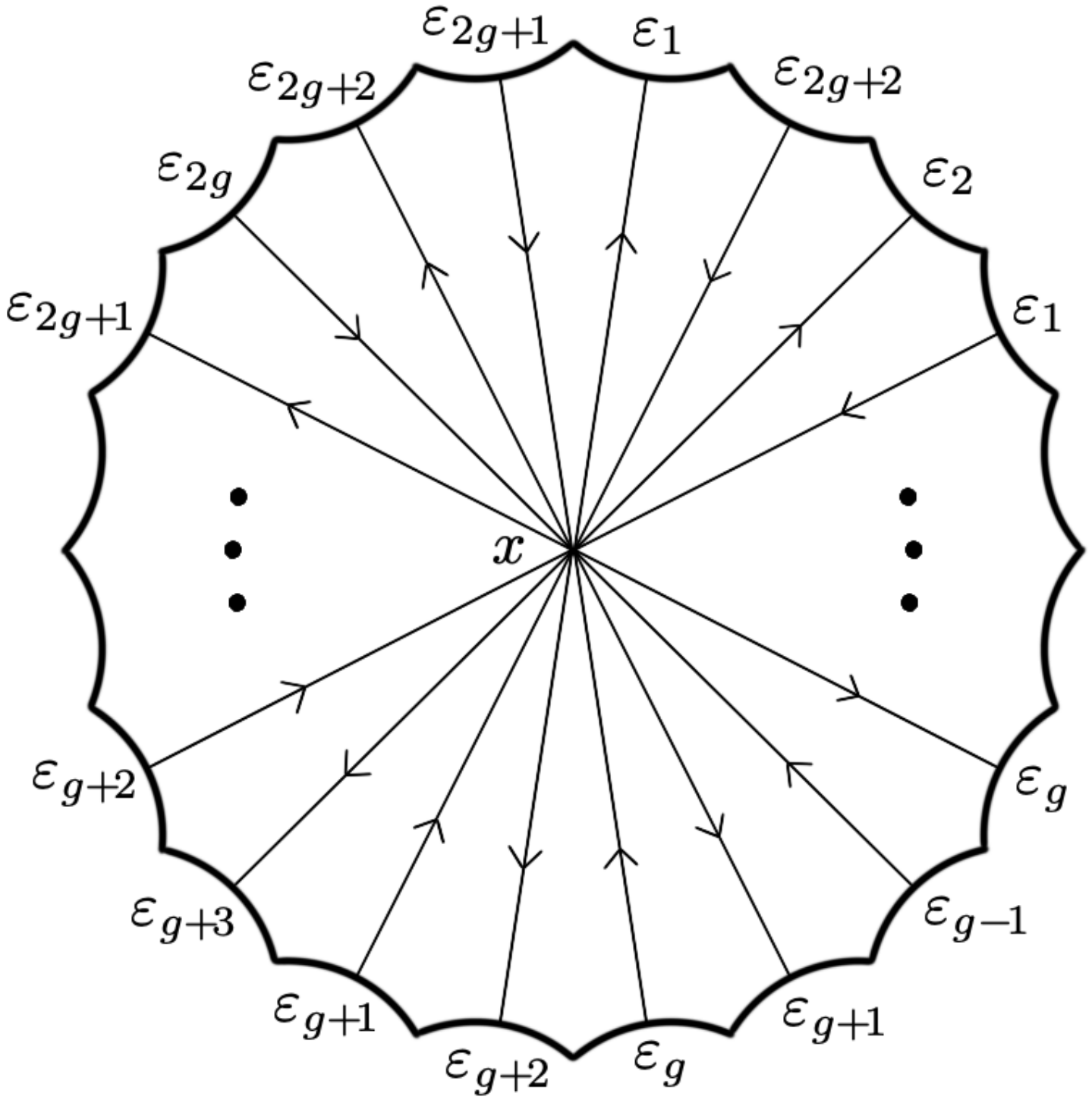}
				\caption{A system of generators of $\pi_1(\Sigma_{g})$ for $\Sigma_{g}$ in Fig.\ \ref{fig:phi3}}
				\label{fig:phi3-3}
			\end{minipage}
			\end{tabular}
		\end{figure}
	
		\begin{prop}\label{prop:f1}
			Take simple closed curves $A_{1}, A_{2}, \ldots, A_{2g+1}$ on $\Sigma_{g}$ as in Figure \ref{fig:phi1-2}.
			A product $A_{1} A_{2} \cdots A_{2g}$ of Dehn twists and the pointed mapping class of $f_1$ induce the same automorphism of $\pi_{1}(\Sigma_{g},x)$.
		\end{prop}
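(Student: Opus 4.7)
The plan is to use the faithful action of $\opn{Mod}(\Sigma_g, x)$ on $\pi_1(\Sigma_g, x)$ (a standard fact, cited already from \cite[Chapter~8]{FM}) and verify the equality of automorphisms on a system of generators. First I would fix the basepoint $x$ to be the barycenter of the $(8g+4)$-gon and take as generators of $\pi_1(\Sigma_g, x)$ the based loops $\alpha_1, \ldots, \alpha_{2g+1}$ indicated in Figure~\ref{fig:phi1-3}, each running from $x$ out through the midpoint of an edge of the polygon and back via the identified edge. Because $f_1$ is the clockwise $\frac{2\pi}{4g+2}$-rotation fixing $x$ and sending the labelled edges to the next ones in the cyclic order, the induced automorphism $(f_1)_\ast$ is essentially a cyclic shift $\alpha_i \mapsto \alpha_{i+1}$ of these loops, possibly correcting by the boundary relation for the index wrap-around. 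This gives a clean target description of $(f_1)_\ast$.

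Next I would read off from Figure~\ref{fig:phi1-2} the geometric intersection data of each $A_j$ with each $\alpha_i$; since the $A_j$'s are small simple closed curves arranged symmetrically around the boundary of the polygon, each $A_j$ meets only a small controlled collection of the $\alpha_i$'s (typically two consecutive ones). Using the standard rule that a right-handed Dehn twist $T_C$ replaces each transverse crossing of a loop $\gamma$ with the curve $C$ by a detour along $C$ (in the right-handed sense), this produces an explicit substitution $\alpha_i \mapsto w_i(\alpha_*)$ for each $A_j$.

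Then I would compose the substitutions in the order $A_1 A_2 \cdots A_{2g}$ and verify that the composite automorphism sends each $\alpha_i$ to the same word as $(f_1)_\ast$ does. A convenient bookkeeping device is to apply the $A_j$ in the order that follows the geometry of the rotation: since $f_1$ cycles the pairs of edges, the twists $A_j$ should be chosen so that consecutive twists each push one more generator one step around. I expect an inductive argument showing that after applying $A_1 \cdots A_k$, the loops $\alpha_1, \ldots, \alpha_k$ have already been shifted to $\alpha_2, \ldots, \alpha_{k+1}$ while the remaining $\alpha_j$ are untouched (up to the expected conjugations); the step $k \to k{+}1$ is then a single local verification involving only two generators.

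The main obstacle will be the book-keeping of the Dehn-twist substitutions as we iterate, particularly managing the conjugating factors that accumulate and checking that the wrap-around at the last step $A_{2g}$ is absorbed correctly by the surface relation $\prod [\alpha_i, \alpha_{i+1}] = 1$ (or its analogue in our chosen presentation). Once this verification is complete, faithfulness of the action of $\opn{Mod}(\Sigma_g, x)$ on $\pi_1(\Sigma_g, x)$ upgrades the equality of induced automorphisms to the desired equality of pointed mapping classes, completing the proof.
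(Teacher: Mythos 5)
Your proposal follows essentially the same route as the paper: verify the equality of induced automorphisms on the generators $\beta_1,\ldots,\beta_{2g}$ of $\pi_1(\Sigma_g,x)$ coming from the polygon picture (the surface relation eliminates $\beta_{2g+1}$), observe that $f_1$ acts by the cyclic shift $\beta_i\mapsto\beta_{i+1}$, compute the local action of each twist $A_j$, and compose. In the paper's setup the bookkeeping you worry about does not arise: each $A_{i+1}$ sends $\beta_i$ exactly to $\beta_{i+1}$ with no conjugating factors and all other $A_j$ fix $\beta_i$, so the product $A_1A_2\cdots A_{2g}$ telescopes immediately without induction.
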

		
		\begin{proof}
			Take $\beta_1, \beta_2, \ldots, \beta_{2g+1} \in \pi_1(\Sigma_{g},x)$ as in Figure \ref{fig:phi1-3}.
			It is easy to see that $\pi_1(\Sigma_{g},x)$ is generated by these $\beta_1, \beta_2, \ldots, \beta_{2g+1}$.
			Since we have 
				\[
					\beta_{2g+1} \displaystyle\prod^{g-1}_{i=0} \beta_{2g-2i}^{-1} \beta_{2g-2i-1} = 1,
				\]
			it follows that $\pi_1(\Sigma_{g},x)$ is generated by $\beta_1, \ldots, \beta_{2g}$.
			Here clearly we have $f_{1}(\beta_{i}) = \beta_{i+1}$ for $i=1, \ldots, 2g$.
			As shown in Figure \ref{fig:phi1ac}, we can see that $A_i\ ( i = 1,2, \cdots , 2g+1)$ acts on $\pi_1(\Sigma_{g},x)$ by 
				\begin{align*}
					A_{i+1}(\beta_i) &= \beta_{i+1} \quad\quad (1 \leq i \leq 2g),\\
					A_j(\beta_i) &= \beta_i \quad\quad ( j \leq i-1 \ \text{or}\   j \geq i+2).
				\end{align*}
			Thus, we have
				\[
					A_{1} A_{2} \cdots A_{2g} (\beta_i)=A_{1} A_{2} \cdots A_{i} A_{i+1} (\beta_i)   
						= A_{1} A_{2} \cdots A_{i}(\beta_{i+1})
						= \beta_{i+1}
				\]
			for $i = 1, \ldots, 2g$.
			Since the action of $f_1$ on $\pi_{1}(\Sigma_{g},x)$ coincides with that of the Dehn twist presentation $A_{1} A_{2} \cdots A_{2g}$, the proposition follows.
		\end{proof}

		\begin{figure}[ht]
			\centering
			\begin{minipage}{0.33\textwidth}
				\centering
				\includegraphics[width=1\textwidth ]{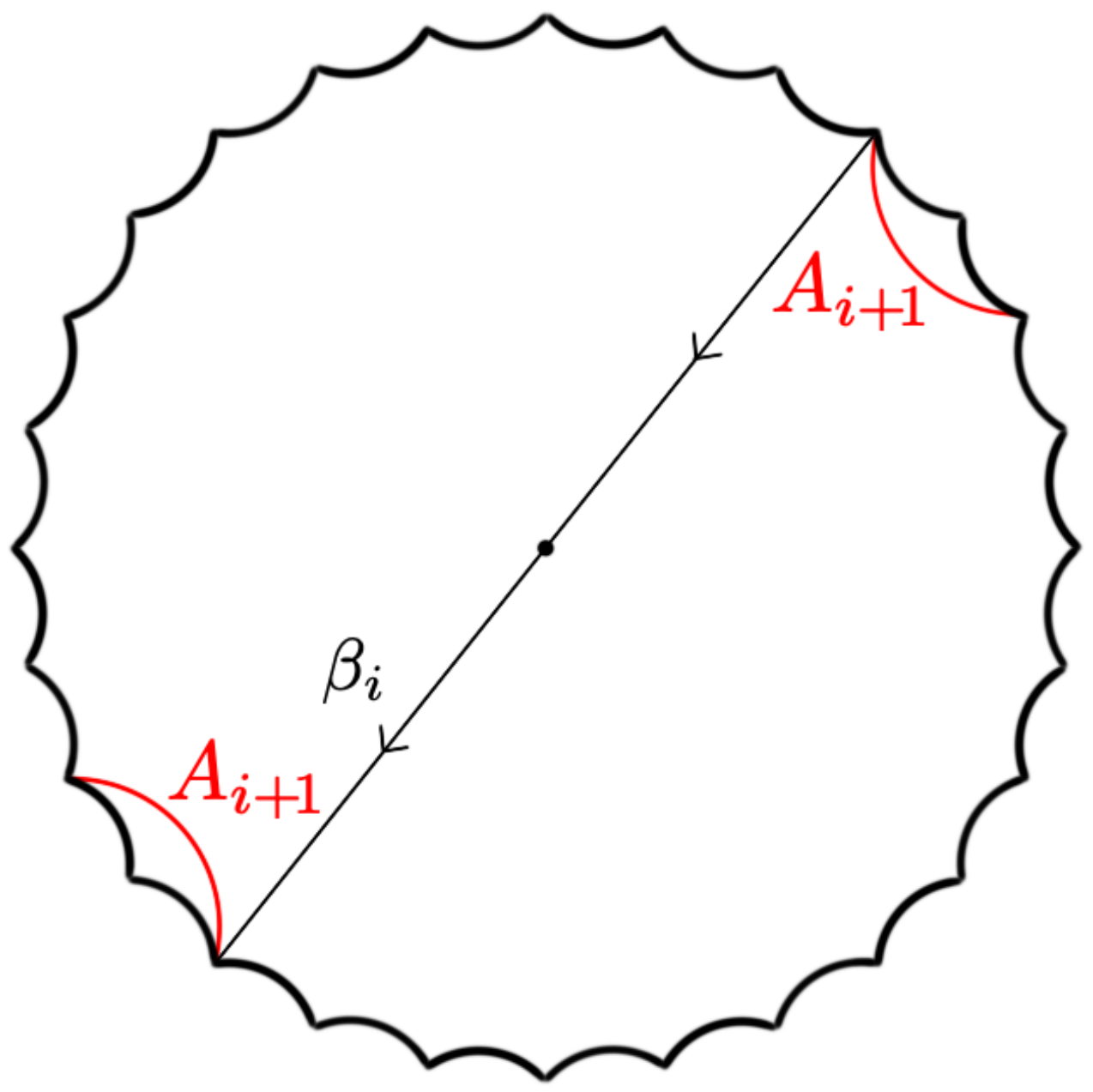}
			\end{minipage}\hfill
			\begin{minipage}{0.33\textwidth}
				\centering
				\includegraphics[width=1\textwidth ]{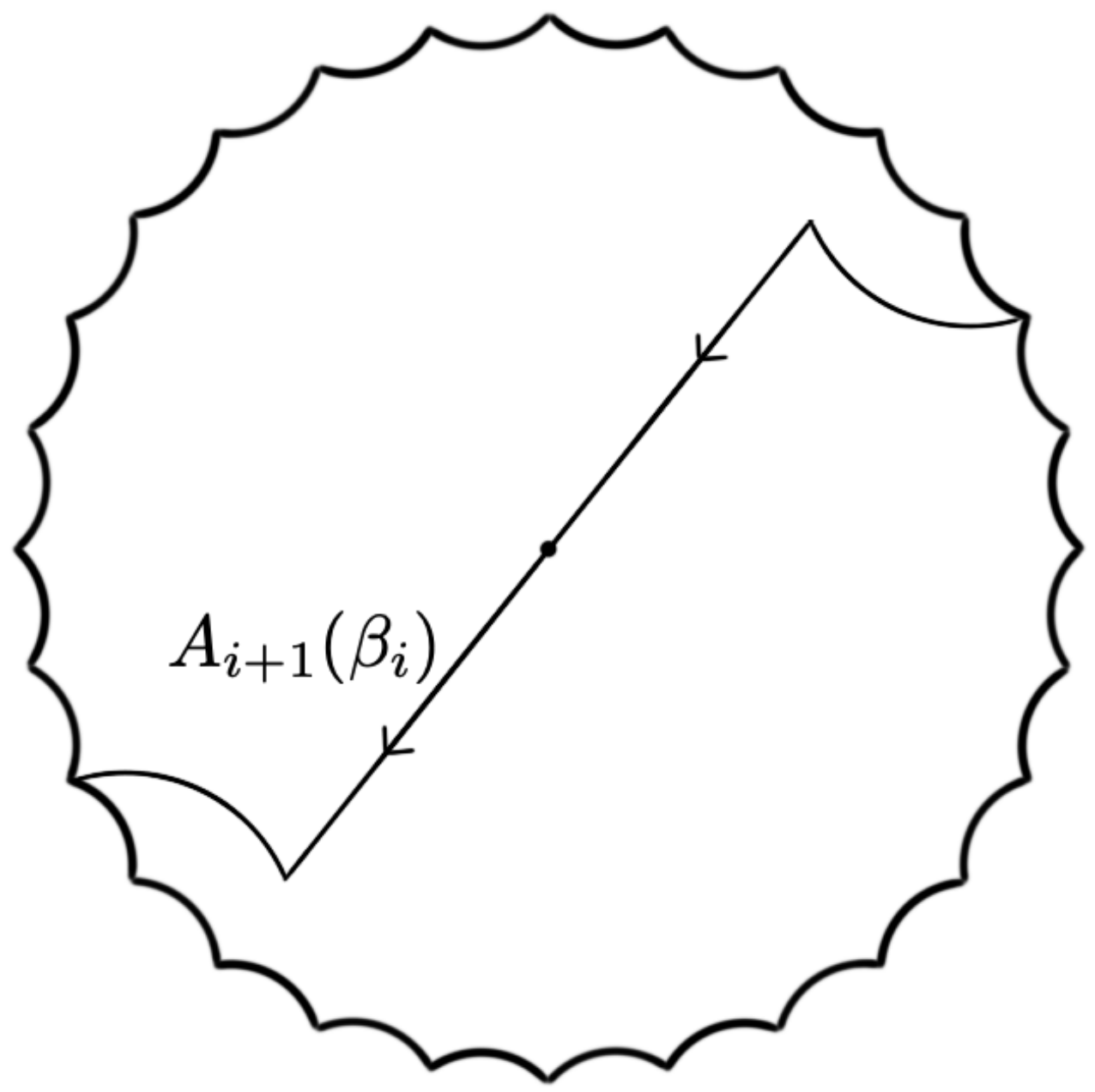}
			\end{minipage}\hfill
			\begin{minipage}{0.33\textwidth}
				\centering
				\includegraphics[width=1\textwidth ]{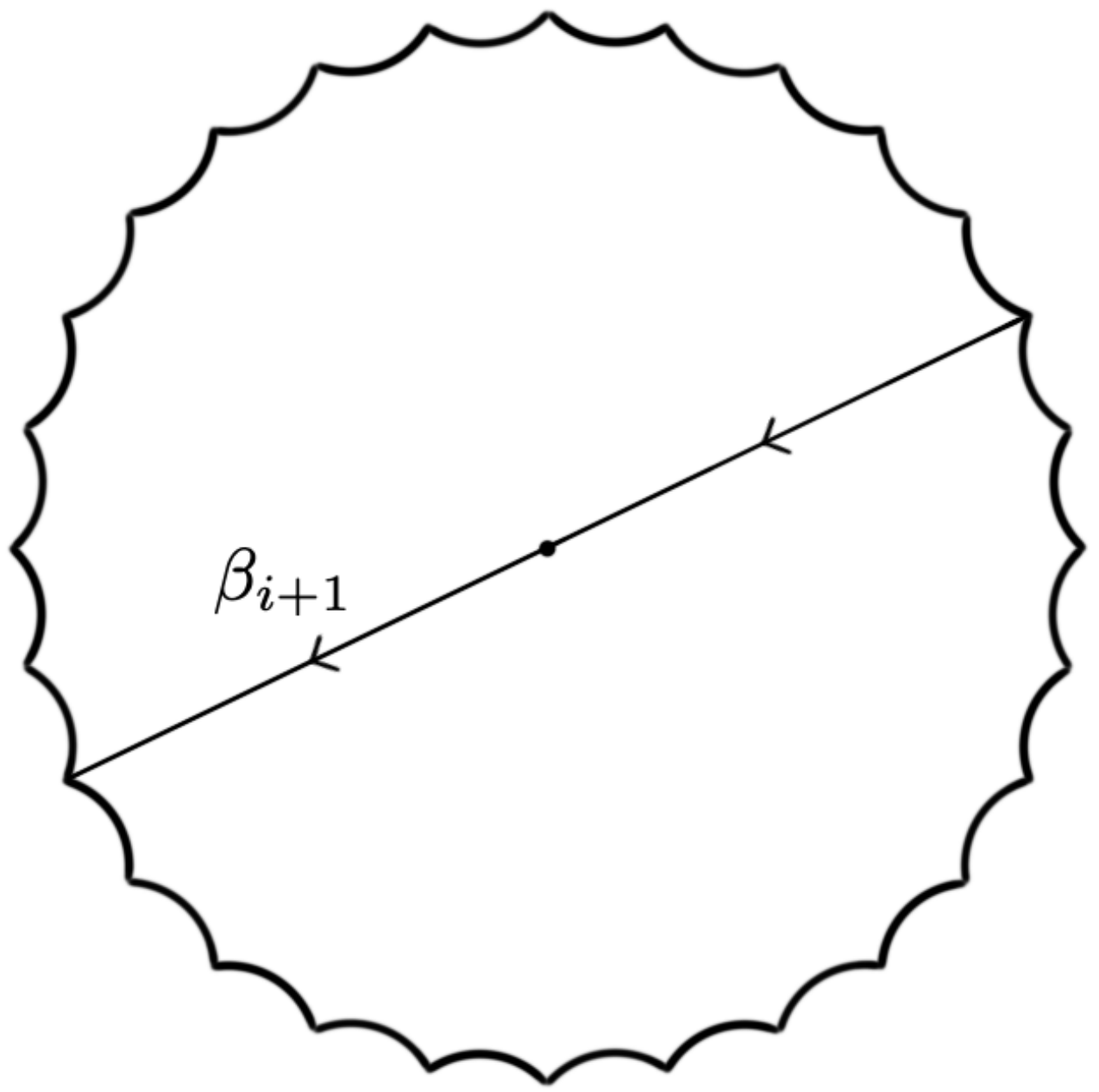}
			\end{minipage}
			\caption{}
			\label{fig:phi1ac}
		\end{figure}

		\begin{prop}\label{prop:f2}
			Take simple closed curves $B_1, B_2, \ldots, B_{2g}$ on $\Sigma_{g}$ as in Figure \ref{fig:phi2-2}.
			A product $B_{1} B_{2g} B_{2g-1} \cdots B_{1}$ of Dehn twists and the pointed mapping class of $f_2$ induce the same automorphism of $\pi_1(\Sigma_{g}, x)$.
		\end{prop}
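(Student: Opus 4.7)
The plan is to mimic the proof of Proposition \ref{prop:f1} verbatim, exploiting the faithfulness of the standard action of $\opn{Mod}(\Sigma_{g},x)$ on $\pi_{1}(\Sigma_{g},x)$ and showing that the Dehn twist word $B_{1} B_{2g} B_{2g-1} \cdots B_{1}$ and the pointed mapping class $(f_{2})$ induce the same automorphism on a convenient generating set.

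First I would fix the loops $\beta_{1}, \ldots, \beta_{2g}$ pictured in Figure \ref{fig:phi2-3}, where the indices are arranged so that they sit in successive fundamental domains of the $\frac{2\pi}{4g}$-rotation on the $8g$-gon. The boundary relation of the $8g$-gon, after one elimination, shows that these $2g$ loops already generate $\pi_{1}(\Sigma_{g},x)$. Using the description of $f_{2}$ as the rotation, I would compute the permutation $f_{2}(\beta_{i})$. Unlike the case of $f_{1}$, the two cone points of index $4g$ for $f_{2}$ have distinct valencies $\frac{1}{4g}$ and $\frac{2g-1}{4g}$ (Lemma \ref{lem:123}), so the action is not a clean index shift $\beta_{i}\mapsto\beta_{i+1}$; there will be a conjugation or an extra factor near the seam, and this is precisely what the repeated $B_{1}$ in the word is designed to absorb.

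Next I would record, by a local intersection analysis analogous to Figure \ref{fig:phi1ac}, the one-twist formulas for each $B_{j}$ acting on each $\beta_{i}$. The expected pattern is $B_{j+1}(\beta_{j})=\beta_{j+1}$, or a similar "neighbour-sends-to-neighbour" rule, together with $B_{k}(\beta_{i})=\beta_{i}$ whenever the support of $B_{k}$ is disjoint from the arc representing $\beta_{i}$; the two $B_{1}$'s should produce non-trivial effects only on a small number of end generators. With these single-twist rules in hand, the composite $B_{1} B_{2g} B_{2g-1} \cdots B_{1}(\beta_{i})$ telescopes in the same way as in the proof of Proposition \ref{prop:f1}: the middle block $B_{2g} B_{2g-1} \cdots B_{2}$, which reverses the ordering relative to the $A$-case, shifts each $\beta_{i}$ across the polygon, and the flanking $B_{1}$'s correct the endpoints.

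The main obstacle I expect is exactly this end correction: verifying by hand that the outer pair of $B_{1}$ twists produces the asymmetric twist in $f_{2}$ near the two special fixed points, whose different valencies force the word to begin and end with $B_{1}$ instead of being a single cyclic product. Once all the induced actions on $\beta_{1},\ldots,\beta_{2g}$ are shown to agree, faithfulness of the $\opn{Mod}(\Sigma_{g},x)$-action on $\pi_{1}(\Sigma_{g},x)$ upgrades this to equality of pointed mapping classes, completing the proof.
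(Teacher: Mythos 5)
Your overall strategy --- compute the action of the twist word and of $f_2$ on a generating set of $\pi_1(\Sigma_g,x)$ and invoke faithfulness of the $\opn{Mod}(\Sigma_g,x)$-action --- is exactly the paper's. But the local twist formulas you anticipate are wrong in a way that would break the telescoping. You expect an $A$-type rule $B_{j+1}(\beta_j)=\beta_{j+1}$ with all other twists acting trivially; under that rule the word $B_1B_{2g}\cdots B_2B_1$, applied rightmost-first to $\beta_i$, would move $\beta_i$ to $\beta_{i+1}$ via $B_{i+1}$ and then immediately move it again via $B_{i+2}$, and so on around the polygon --- the product would over-shift rather than realize the single shift $f_2$. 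The actual mechanism, which the paper makes explicit by introducing a second family of loops $\delta_1,\ldots,\delta_{2g}$ (Figure \ref{fig:phi2-3}), is a two-step relay: $B_i(\gamma_i)=\delta_{i+1}$ and $B_{i+1}(\delta_{i+1})=\gamma_{i+1}$, while $B_j(\gamma_i)=\gamma_i$ for all $j\neq i$. Each generator $\gamma_i$ is therefore moved only by the consecutive pair $B_i,B_{i+1}$ and is then left untouched by every later twist, which is what makes the product collapse to $\gamma_i\mapsto\gamma_{i+1}$.

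This also corrects your explanation of why $B_1$ occurs twice. It is not an ``endpoint correction'' forced by the asymmetric valencies of the two cone points; it is the cyclic wrap-around of the relay. The pair needed for the last generator is $(B_{2g},B_1)$, via $B_{2g}(\gamma_{2g})=\delta_1^{-1}$ and $B_1(\delta_1^{-1})=\gamma_1^{-1}$, matching $f_2(\gamma_{2g})=\gamma_1^{-1}$, while the rightmost $B_1$ is the first half of the pair $(B_1,B_2)$ that moves $\gamma_1$. If you carry out the ``local intersection analysis'' you mention, you will of course discover the correct formulas, but as written your expected pattern and the claimed telescoping do not go through; you need the auxiliary loops $\delta_i$ (or equivalently their expressions in terms of the $\gamma_j$'s) to organize the computation.
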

		
		\begin{proof}
			Take $\gamma_1, \gamma_2, \ldots, \gamma_{2g}, \delta_1, \delta_2, \ldots, \delta_{2g} \in \pi_1(\Sigma_{g},x)$ as shown in Figure \ref{fig:phi2-3}.
			It is easy to see that $\pi_1(\Sigma_{g},x)$ is generated by these $\gamma_1, \gamma_2, \ldots, \gamma_{2g}, \delta_1$.
			Since we have 
				\[
					\delta_1 = \displaystyle\prod^{g-1}_{i=0} \gamma_{2g-2i}^{-1} \gamma_{2g-2i-1},
				\]
			it follows that $\pi_1(\Sigma_{g},x)$ is generated by $\gamma_1, \ldots, \gamma_{2g}$.
			Here clearly we have
				\[ 
					f_{2}(\gamma_{i}) = \gamma_{i+1} \ (i=1, \ldots, 2g-1)\  \text{and}\  f_{2}(\gamma_{2g})=\gamma_1^{-1}.
				\]
			We can see that $B_i\ ( i = 1,2, \ldots, 2g)$ acts on $\pi_1(\Sigma_{g},x)$ by 
				\begin{align*}
					B_i(\delta_i) &= \gamma_{i} \quad\quad (1 \leq i \leq 2g),\\
					B_i(\gamma_i) &= \delta_{i+1} \quad\quad ( 1 \leq i \leq 2g-1),\\
					B_{2g}(\gamma_{2g}) &=\delta_1^{-1},\\
					B_j(\gamma_i) &=\gamma_i \quad\quad (i\neq j).
				\end{align*}
			Thus, we have
				\begin{align*}
					B_{1} B_{2g} B_{2g-1} \cdots B_i \cdots B_{1} (\gamma_i)
						&= B_{1} B_{2g} B_{2g-1} \cdots B_{i}(\gamma_{i})\\
						&= B_{1} B_{2g} B_{2g-1} \cdots B_{i+1}(\delta_{i+1})\\
						&= B_{1} B_{2g} B_{2g-1} \cdots B_{i+2} (\gamma_{i+1}) = \gamma_{i+1},
				\end{align*}
			for $i=1,\ldots,2g-1$ and 
				\[
					B_{1} B_{2g} B_{2g-1} \cdots B_{1} (\gamma_{2g})
						= B_{1} B_{2g}(\gamma_{2g})
						= B_{1}  (\delta_{1}^{-1}) = \gamma_{1}^{-1}.
				\]
			Therefore, since the action of $f_2$ on $\pi_{1}(\Sigma_{g},x)$ coincides with that of the Dehn twist presentation $B_{1} B_{2g} B_{2g-1} \cdots B_{1}$, the proposition follows.
		\end{proof}

		\begin{prop}\label{prop:f3}
			Take simple closed curves $D_1, D_2, \ldots, D_{2g+1}$ on $\Sigma_{g}$ as in Figure \ref{fig:phi3-2}.
			A product $D_{1} D_{2} \cdots D_{2g+1}$ of Dehn twists and the pointed mapping class of $f_2$ induce the same automorphism of $\pi_1(\Sigma_{g}, x)$
		\end{prop}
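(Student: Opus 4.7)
The plan is to mirror the strategy used in Propositions \ref{prop:f1} and \ref{prop:f2}, exploiting the fact that the standard action of $\opn{Mod}(\Sigma_{g}, x)$ on $\pi_{1}(\Sigma_{g}, x)$ is faithful. It therefore suffices to produce an explicit generating set of $\pi_{1}(\Sigma_{g}, x)$ and to verify that the automorphisms induced by $f_{3}$ and by the product $D_{1} D_{2} \cdots D_{2g+1}$ agree on each generator.

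First I would fix the loops drawn in Figure \ref{fig:phi3-3}, say $\epsilon_{1}, \epsilon_{2}, \ldots, \epsilon_{2g+2}$, based at the barycenter $x$ of the $(4g+4)$-gon and traversing the identified edge pairs. Reading off the boundary word of the fundamental polygon produces a single relation among the $\epsilon_{i}$'s, and, as in the proofs of Propositions \ref{prop:f1} and \ref{prop:f2}, solving this relation allows me to discard one of them and retain $\epsilon_{1}, \ldots, \epsilon_{2g+1}$ as a generating set. Since $f_{3}$ is induced by the $\frac{2\pi}{2g+2}$-rotation of the polygon, direct inspection of Figure \ref{fig:phi3} will give $f_{3}(\epsilon_{i}) = \epsilon_{i+1}$ for $i = 1, \ldots, 2g$, together with a formula for $f_{3}(\epsilon_{2g+1})$ dictated by the defining relation.

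Next I would read off from Figure \ref{fig:phi3-2} the action of each $D_{i}$ on the chosen generators. The expectation, by analogy with Propositions \ref{prop:f1} and \ref{prop:f2}, is that each $D_{i}$ is supported in a small neighborhood of its defining curve and therefore acts by a local rule such as $D_{i+1}(\epsilon_{i}) = \epsilon_{i+1}$, while fixing the $\epsilon_{j}$ whose supporting arcs are disjoint from $D_{i+1}$. Plugging these formulas into $D_{1} D_{2} \cdots D_{2g+1}(\epsilon_{i})$ yields a telescoping calculation
\[
D_{1} D_{2} \cdots D_{2g+1}(\epsilon_{i}) = D_{1} \cdots D_{i+1}(\epsilon_{i}) = D_{1} \cdots D_{i}(\epsilon_{i+1}) = \epsilon_{i+1}
\]
for $i = 1, \ldots, 2g$, matching $f_{3}(\epsilon_{i})$.

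The main obstacle is the boundary case $i = 2g+1$. Unlike in Proposition \ref{prop:f1}, where the number of Dehn twists equals the number of generators shifted, here the extra twist $D_{2g+1}$ is present precisely to realize the nontrivial element $f_{3}(\epsilon_{2g+1})$ coming from the relation, and verifying this requires carefully tracking the relation word through the composition. This bookkeeping—rather than any conceptual difficulty—is the only nonroutine step, and once completed it closes the argument via faithfulness of the $\opn{Mod}(\Sigma_{g},x)$-action on $\pi_{1}(\Sigma_{g},x)$.
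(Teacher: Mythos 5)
Your outline coincides with the paper's strategy: use faithfulness of the $\opn{Mod}(\Sigma_{g},x)$-action on $\pi_1(\Sigma_{g},x)$, extract a generating set from the fundamental polygon, and compare the two automorphisms generator by generator through a telescoping product. However, two of the details you anticipate are not what actually happens, and both affect whether the computation closes. First, the local rule for the curves $D_i$ of Figure \ref{fig:phi3-2} is not the pure shift $D_{i+1}(\varepsilon_i)=\varepsilon_{i+1}$ that you predict by analogy with Proposition \ref{prop:f1}. Each $D_i$ here interacts with two consecutive generators: the correct rules are $D_{i+1}(\varepsilon_i)=\varepsilon_{i+1}\varepsilon_i$, $D_i(\varepsilon_{i+1})=\varepsilon_{i+1}\varepsilon_i^{-1}$, and $D_j(\varepsilon_i)=\varepsilon_i$ only when $|i-j|\neq 1$. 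Consequently your last telescoping step $D_1\cdots D_i(\varepsilon_{i+1})=\varepsilon_{i+1}$ is false as stated, since $D_i$ does not fix $\varepsilon_{i+1}$; the product collapses instead via the cancellation
\[
D_1\cdots D_i(\varepsilon_{i+1}\varepsilon_i)=D_1\cdots D_{i-1}(\varepsilon_{i+1}\varepsilon_i^{-1}\varepsilon_i)=D_1\cdots D_{i-1}(\varepsilon_{i+1})=\varepsilon_{i+1}.
\]

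Second, the ``boundary case $i=2g+1$'' that you single out as the main obstacle does not arise. The $(4g+4)$-gon produces $2g+2$ loops $\varepsilon_1,\ldots,\varepsilon_{2g+2}$ subject to more than one relation; in the paper the identities $\prod^{g}_{i=0}\varepsilon_{2g+2-2i}=\prod^{g}_{i=0}\varepsilon_{2g+1-2i}=1$ let one eliminate both $\varepsilon_{2g+2}$ and $\varepsilon_{2g+1}$, so that $\pi_1(\Sigma_{g},x)$ is generated by $\varepsilon_1,\ldots,\varepsilon_{2g}$ alone. Agreement of the two automorphisms therefore only needs to be checked for $i=1,\ldots,2g$, precisely the range where $f_3(\varepsilon_i)=\varepsilon_{i+1}$, and no relation word has to be tracked through the composition. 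In short, the conceptual frame of your proposal is right, but the nonroutine work is in reading off the correct (non-shift) twist formulas from the figure, not in the boundary bookkeeping you identify.
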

		
		\begin{proof}
			Take $\varepsilon_1, \varepsilon_2, \ldots, \varepsilon_{2g+1} \in \pi_1(\Sigma_{g},x)$ on $\Sigma_{g}$ as shown in Figure \ref{fig:phi3-3}.
						It is easy to see that $\pi_1(\Sigma_{g},x)$ is generated by these $\varepsilon_1, \varepsilon_2, \ldots, \varepsilon_{2g+2}$.
			Since we have 
				\[
					\displaystyle\prod^{g}_{i=0} \varepsilon_{2g+2-2i} = \displaystyle\prod^{g}_{i=0} \varepsilon_{2g+1-2i} = \displaystyle\prod^{2g+1}_{i=0} \varepsilon_{2g+2-i} = 1,
				\]
			it follows that $\pi_1(\Sigma_{g},x)$ is generated by $\varepsilon_1, \ldots, \varepsilon_{2g}$.
			Here clearly we have $f_{3}(\varepsilon_{i}) = \varepsilon_{i+1}$ for $i=1, \ldots, 2g$.
		We can see that $D_i \ ( i = 1,2, \ldots, 2g+1)$ acts on $\pi_1(\Sigma_{g},x)$ by
				\begin{align*}
					D_i(\varepsilon_{i+1}) &=\varepsilon _{i+1} \varepsilon_i^{-1} \quad\quad ( 1 \leq i \leq 2g+1 ),\\
					D_{i+1}(\varepsilon_i) &= \varepsilon_{i+1}\varepsilon_i \quad\quad ( 1 \leq i \leq 2g+1 ),\\
					D_j(\varepsilon_i) &= \varepsilon_i \quad\quad (\mid i - j \mid \neq 1).
				\end{align*}
			Thus, we have
				\begin{align*}
					D_{1} D_{2} \cdots D_i D_{i+1} \cdots D_{2g+1} (\varepsilon_i)
						&= D_{1} D_{2} \cdots D_{i}(\varepsilon_{i+1}\varepsilon_i)\\
						&= D_{1} D_{2} \cdots D_{i-1} (\varepsilon_{i+1} \varepsilon_i ^{-1} \varepsilon_i)\\
						&= D_{1} D_{2} \cdots D_{i-1} (\varepsilon_{i+1}) = \varepsilon_{i+1},
				\end{align*}
			for $i = 1, \cdots 2g$.
			Since the action of $f_3$ on $\pi_{1}(\Sigma_{g},x)$ coincides with that of the Dehn twist presentation $D_{1} D_{2} \cdots D_{2g+1}$, the proposition follows.
		\end{proof}
		
Theorem \ref{thm:Dehn} is a combination of Propositions \ref{prop:f1}, \ref{prop:f2} and \ref{prop:f3}.

\end{document}